\newtheorem{thm}{Theorem}[section]
\newtheorem{prop}[thm]{Proposition}
\newtheorem{cor}[thm]{Corollary}
\theoremstyle{remark}
\newtheorem{rem}[thm]{Remark}
\theoremstyle{definition}
\newtheoremstyle{Claim}{}{}{\itshape}{}{\itshape\bfseries}{:}{ }{#1}
\theoremstyle{Claim}
\newcommand{\T}{{\mathbb{T}^d}}
\newcommand{\Z}{{\mathbb{Z}}}
\newcommand{\R}{\mathbb{R}}
\theoremstyle{plain}
\def\sideremark#1{\ifvmode\leavevmode\fi\vadjust{
\vbox to0pt{\hbox to 0pt{\hskip\hsize\hskip1em
\vbox{\hsize3cm\tiny\raggedright\pretolerance10000
\noindent #1\hfill}\hss}\vbox to8pt{\vfil}\vss}}}
\begin{document}

\title[]{On the optimal $L^q$-regularity for viscous Hamilton-Jacobi equations with subquadratic growth in the gradient}



\author{Alessandro Goffi}
\address{Dipartimento di Matematica ``Tullio Levi-Civita'', Universit\`a degli Studi di Padova, 
via Trieste 63, 35121 Padova (Italy)}
\curraddr{}
\email{alessandro.goffi@unipd.it}
\thanks{}

\subjclass[2010]{35F21, 35J61, 35B65}
\keywords{Adjoint method, Duality method, Hamilton-Jacobi equation, Maximal $L^q$-regularity, Riccati equation.}
 \thanks{
 The author is member of the Gruppo Nazionale per l'Analisi Matematica, la Probabilit\`a e le loro Applicazioni (GNAMPA) of the Istituto Nazionale di Alta Matematica (INdAM). The author wishes to thank Prof. T. Leonori for fruitful discussions and suggesting many references on the subject of this paper. The author is indebted to the anonymous referee for the careful review which meant a significant improvement of the first version of the manuscript.
 }

\date{\today}

\begin{abstract}
This paper studies a maximal $L^q$-regularity property for nonlinear elliptic equations of second order with a zero-th order term and gradient nonlinearities having superlinear and subquadratic growth, complemented with Dirichlet boundary conditions. The approach is based on the combination of linear elliptic regularity theory and interpolation inequalities, so that the analysis of the maximal regularity estimates boils down to determine lower order integral bounds. The latter are achieved via a $L^p$ duality method, which exploits the regularity properties of solutions to stationary Fokker-Planck equations. For the latter problems, we discuss both global and local estimates. Our main novelties for the regularity properties of this class of nonlinear elliptic boundary-value problems are the analysis of the end-point summability threshold $q=d(\gamma-1)/\gamma$, $d$ being the dimension of the ambient space and $\gamma>1$ the growth of the first-order term in the gradient variable, along with the treatment of the full integrability range $q>1$.  
\end{abstract}

\maketitle

\tableofcontents
\section{Introduction}
In this note we establish maximal regularity properties in Lebesgue spaces for a large class of second order nonlinear elliptic equations, whose main model is the viscous Hamilton-Jacobi equation, equipped with Dirichlet boundary conditions of the form
\begin{equation}\label{hj}
\begin{cases}
- \Delta u(x)  +\lambda u(x)+H(x, Du(x)) = f(x) & \text{in $\Omega$}, \\
u(x) = 0 & \text{on $\partial\Omega$,}
\end{cases}
\end{equation}
with an unbounded right-hand side $f\in L^q(\Omega)$, $\Omega$ being a $C^2$ bounded domain of the $d$-dimensional Euclidean space $\R^d$, $\lambda\in\R$, $H=H(x,p)$ a nonlinearity with superlinear growth in the second entry. By maximal regularity in Lebesgue spaces we mean that an a priori information on the source term $f\in L^q$ implies bounds on the individual terms $D^2u,H(Du)$ on the left-hand side of the equation, see \cite{ChenWu,GT} for related properties for linear equations. Here, $H\in C(\Omega\times\R^d)$ is convex in the second variable and has superlinear growth, i.e. it satisfies for $\gamma>1$ the following assumption
\begin{equation}\label{H}\tag{H}
C_H^{-1}|p|^{\gamma}-C_H\leq H(x,p)\leq C_H(|p|^{\gamma}+1)
\end{equation}
for every $x\in \Omega$, $p\in\R^d$. 
We also assume that 
\begin{equation}\label{zeroth}
\lambda\geq 0\ .
\end{equation}
It is well-known that such an assumption is crucial to have at least existence of solutions in suitable Sobolev spaces, cf \cite{NotePorr}.\\
Second order estimates in $L^q$ in the linear and sublinear range of growth of the first-order term $0<\gamma\leq1$ are classical matter that can be inferred through the standard $W^{2,q}$ regularity theory: indeed, it is enough to apply the arguments in \cite[Section 3.5]{ChenWu} for linear problems, as already done for boundary-value problems of more general nonlinear equations with at most linear growth, see e.g. \cite[Proposition 2.6]{KS} (note that restrictions on $q$ are needed only for the fully nonlinear setting).\\
Instead, the problem of optimal gradient regularity in Lebesgue spaces for these classes of PDEs in the superlinear regime $\gamma>1$ has been proposed in a series of seminars by P.-L. Lions \cite{LionsSeminar,Napoli}, who conjectured its validity at least under the assumption that  
\begin{equation}\label{qdg}q>\frac{d(\gamma-1)}{\gamma}=:q_{d,\gamma},\end{equation}
 $\gamma>1$, $q\geq1$, $d$ being the dimension of the ambient space. Except for some specific cases discussed in his seminars, see e.g. \cite[pp.1522-1523]{CGell} or Section \ref{op} for more details and open problems, the conjecture was investigated in \cite{CGell} for equations posed on the flat torus (i.e. the case of the ergodic control of a diffusion) without zero-th order terms in the equation, under the coupled assumption
\[
q>q_{d,\gamma}\text{ and }q>2,
\]
through a delicate refinement of the integral Bernstein method, that in turn forced to add the additional restriction $q>2$. Nonetheless, the above coupled condition is always realized in a suitable range of the parameter $\gamma$, e.g. when $\gamma>\frac{d}{d-2}$.\\
Notice that the condition \eqref{qdg} is natural if one regards the problem as a nonlinear Poisson equation: indeed, if $u$ solves \eqref{hj} with $\lambda=0$, then it is a solution to
\[
-\Delta u=-H(x,Du)+f(x)\ ,
\]
so that classical maximal $L^q$-elliptic regularity properties, \eqref{H} and Sobolev embeddings lead to the estimates
\[
\|Du\|_{L^{\frac{dq}{d-q}}(\Omega)}\lesssim \|u\|_{W^{2,q}(\Omega)}\lesssim \||Du|\|_{L^{q\gamma}(\Omega)}^\gamma+\|f\|_{L^q(\Omega)}.
\]
Therefore, one can expect maximal regularity properties for the nonlinear problem whenever the integrability of the gradient on the right-hand side is less than the one on the left, i.e. whenever $q^*=\frac{dq}{d-q}\geq q\gamma$, which implies $q\geq\frac{d(\gamma-1)}{\gamma}$.\\
 In this note we continue the analysis on this regularity problem for different boundary conditions, as suggested by P.-L. Lions in his seminars, but for the sole range $\gamma< 2$. Indeed, the superquadratic case $\gamma>2$ and the full range $q\geq\frac{d(\gamma-1)}{\gamma}$ require a different treatment and new estimates at the level of H\"older spaces that have been recently addressed in \cite{CV,c22par} through blow-up methods, so we do not pursue on this direction. Nonetheless, the duality method of this manuscript would even provide results in the regime $\gamma>2$, but at the expenses of strengthening the summability of $f$, as in \cite{CGpar}.\\
 We first show that it is possible to have a control on $D^2u,|Du|^\gamma\in L^q$ in terms of $f\in L^q$ removing the above restriction $q>2$ in the whole subquadratic regime $\gamma<2$ proposing a different proof with respect to \cite{CGell}. Here, the approach is inspired by that employed for the parabolic problem in \cite{CGpar}, and will be based on a perturbation argument that combines $L^q$-regularity results for linear problems, cf Appendix \ref{app}, and interpolation inequalities \cite{Nirenberg}, handling Dirichlet boundary conditions. As a consequence, the obtainment of maximal $L^q$-regularity estimates through interpolation methods boils down to establish a lower order bound for solutions to \eqref{hj} in Lebesgue spaces. We deduce such a level of regularity via duality arguments, following \cite{cg20,CGpar}. The study of regularity properties for linear problems (even at the level of maximal regularity) shifting the attention to their adjoint equations is a classical idea, see e.g. \cite{EvansTAMS,Lin} and the references therein, and it has been recently and systematically explored in the nonlinear setting of Hamilton-Jacobi equations starting with the work by L.C. Evans \cite{Evansacc}, and later in the context of Mean Field Games, see e.g. \cite{Gomesbook,CGpar} for a thorough bibliography. Due to the presence of a nonlinear term, this approach is tied up with the smoothness properties, at the level of Sobolev spaces, of a (dual) stationary Fokker-Planck equation with a ``singular'' forcing term of the form
\begin{equation}\label{adjintro}
\begin{cases}
-\Delta\rho+\lambda\rho+\mathrm{div}(b(x)\rho)=\delta_{x_0}& \text{in $\Omega$} \\
\rho = 0 & \text{on $\partial\Omega$,}
\end{cases}
\end{equation}
 where $\delta_{x_0}$ stands for the Dirac measure at some $x_0\in\Omega$, when the drift $b$ satisfies some a priori integrability conditions against the solution $\rho$ itself. \\
As a second aim, we discuss the maximal regularity property even at the critical level of summability $q=\frac{d(\gamma-1)}{\gamma}$, thus providing new advances on this regularity problem with respect to \cite{CGell}. In this case, as previously observed in \cite{GMP14}, we enlighten the role of the zero-th order term in the equation with respect to the dependence on the right-hand side datum $f\in L^q$. Even though the existence of solutions require in general $\lambda>0$, see \cite{NotePorr}, we emphasize that regularity or a priori estimates are expected to hold up to $\lambda=0$, as in \cite{GMP14}, the difference among the cases $\lambda>0$ and $\lambda=0$ being in the dependence of the maximal regularity estimate.\\
 
 The first main maximal $L^q$-regularity property in the \textit{subcritical regime} $q>\frac{d(\gamma-1)}{\gamma}$ will be addressed in Theorem \ref{main} through lower-order bounds on solutions to \eqref{hj} in Lebesgue spaces, see Corollary \ref{finalint}. We emphasize that this seems, to the author's knowledge, a slightly novel viewpoint of the analysis of integral estimates in the framework of maximal regularity for these nonlinear elliptic equations with coercive gradient terms and Dirichlet boundary conditions. However, we borrow several ideas already appeared in the context of parabolic equations from \cite{cg20,CGpar}. It is worth pointing out that a similar $L^p$ duality argument was developed in a rather different context by M. Pierre \cite[Section 3.2]{Pierre} to prove integral estimates for solutions to time-dependent reaction-diffusion systems, while some duality methods in the context of elliptic boundary value problems were previously used in various papers, see e.g. \cite{BensFrehse,BensoussanBook,BocBuc,BocJDE,BOP}, the more recent \cite{BO} and the references therein, but at different summability scales and for different notions of solutions. We note that the integral estimates we obtain in Corollary \ref{finalint} for \eqref{hj} in the case $q>\frac{d(\gamma-1)}{\gamma}$ are not new when $\gamma<2$ and they can be found in \cite{GMP14} for more general equations with diffusion operators in divergence-form modeled over the $p$-Laplacian. Nevertheless, they were obtained using completely different methods, yet based on test function arguments, under the restriction $\gamma<p$ (when the leading operator is the $p$-Laplacian). Hence, our method of proof seems new with respect to the current literature of boundary-value problems. Instead, some integral bounds for distributional subsolutions when $\gamma>2$ can be found in \cite{DAP}. As far as the $L^\infty$ bounds are concerned, our methods unify the treatment in \cite{GMP14,DAP} and allow to handle both the sub- and superquadratic regimes of the nonlinearity.\\
 Our second main result is a treatment of the maximal regularity problem at the end-point summability threshold $q=d(\gamma-1)/\gamma$, assuming further integrability assumptions on the right-hand side $f\in L^q$. This kind of dependence was already pointed out for estimates at lower integrability scales, see \cite{GMP14}. Indeed, we are able to show that when $\lambda>0$ the maximal regularity estimate depends on the equi-integrability properties of the datum $f$ in $L^q$. Instead, when $\lambda=0$, the maximal regularity estimates depend also on $u\in L^q$ other than $f$, and to obtain a full estimate depending solely on the integrability of the source of the equation one needs to impose further smallness conditions on $f$, see Remark \ref{remfull} and also the recent work \cite{CGL}. Our proof is inspired by a stability argument recently proposed in the parabolic setting in \cite[Theorem 1.3]{CGpar} and, to our knowledge, it cannot be inferred from the known results in the literature, e.g. from \cite{CGell,GMP14}. \\
Still, we remark that the method we propose here to get global maximal regularity does not require any geometric assumption on the domain. Indeed, the use of the Bernstein method typically requires the convexity of the state space \cite{GP}, at least for Neumann boundary value problems.\\
 
As already mentioned, the integral estimates we obtain by duality are achieved through Sobolev regularity estimates for stationary Fokker-Planck equations having drifts satisfying suitable summability conditions against the solution itself. An additional distinctive feature of our method is to provide new global bounds for the Dirichlet problem \eqref{adjintro} as well as local estimates for solutions to linear equations with lower-order coefficients in divergence form, when $|b|\in L^k(\rho\,dx)$, for some $k>1$, that might be of independent interest. This goal is achieved by adapting the approach of earlier works on the subject \cite{MPRell,cg20,CGpar}.\\
Then, the integral estimates for \eqref{hj} follow by observing that an integrability information on the quantity $D_pH(x,Du)$ along the solution controls the regularity in Lebesgue scales of the solution $u$ itself, since in turn it controls the (Sobolev) regularity of the solution of the dual problem \eqref{adjintro} with $b=-D_pH(x,Du)$, as in \cite{cg20,CGpar}. This crucial step is achieved through an integral representation formula, see the proofs of Propositions \ref{upiu} and \ref{umeno} for further details.\\
 We remark that our variation on the Evans' scheme seems to be the first application to stationary Hamilton-Jacobi equations with $L^p$ coefficients, and also to different boundary conditions than the periodic setting, except the work \cite{EvansSmart} on differentiability properties of solutions to boundary-value problems of equations modeled over the $\infty$-Laplacian. We refer to \cite{Gomesbook,cg20,CGpar,TranBook} and the references therein for further developments and earlier results through this nonlinear duality method.\\
 
It is worth pointing out that some maximal regularity results have been already obtained in the literature. Other than the aforementioned paper \cite{GMP14}, when $H$ is slowly increasing in the gradient variable (precisely when $\gamma\leq\frac{d+2}{2}$, which prevents from the use of energy formulations), the paper \cite{MercaldoCPAA} addressed gradient regularity properties when the source $f$ belongs to finer Lorentz classes without zero-th order terms, see also the references therein and Remark \ref{sulorentz}. Still, a quite general regularity theory on spaces of maximal regularity for such equations, mostly for systems of Hamilton-Jacobi-Bellman equations with bounded data and first-order terms having at most quadratic growth, can be found in \cite{BensoussanBook}. Some other results on second order Sobolev spaces via the Amann-Crandall approach \cite{AC} can be found in the monograph \cite{MaSoBook}, and are based on Aleksandrov-Bakel'man-Pucci estimates, which in turn require $f\in L^d$.\\ 
We conclude by saying that our underlying motive for this analysis relies on the application of the maximal regularity estimates to the existence problem of classical solutions to the systems of PDEs arising in the theory of Mean Field Games introduced by J.-M. Lasry-P.-L. Lions \cite{LL07}, when the coupling term of the Hamilton-Jacobi equation has power-like growth, cf \cite{CCPDE,CC} and \cite[Theorems 1.4 and 1.5]{CGpar}. Some recent results in this direction through maximal regularity for the so-called defocusing systems \cite{CCPDE} posed on convex domains of the Euclidean space and Neumann boundary conditions have been discussed in \cite{GP}.\\
Finally, an extension of this method to the parabolic case would allow to treat even the following Cauchy-Dirichlet problem when $\gamma<2$
\[
\begin{cases}
\partial_tu-\Delta u+H(x,Du)=f(x,t)&\text{ in }\Omega\times(0,T),\\
u(x,0)=u_0(x)&\text{ in }\Omega,\\
u(x,t)=0&\text{ on }\partial\Omega\times(0,T),
\end{cases}
\]
with suitable changes in the integrability exponent $q$ of $f\in L^q(\Omega\times(0,T))$, leading to the counterpart of the results in \cite{CGpar} for the case of bounded domains with homogeneous Dirichlet boundary conditions.\\
 We finally mention that our technique does not apply to problems driven by the $p$-Laplacian, as in \cite{GMP14}, though some recent advances on the problem of maximal regularity in Lebesgue spaces for the quasilinear counterpart of \eqref{hj} have been recently obtained in \cite{CGL} using different integral Bernstein methods. 

\textit{Plan of the paper}. Section \ref{mainres} is devoted to present the statements of the main results of the manuscript along with some related remarks. In Section \ref{estfp} we establish the Sobolev regularity of solutions for the adjoint Fokker-Planck equation, analyzing both global and interior estimates, while Section \ref{estu} studies the integral estimates for \eqref{hj} by duality methods. Section \ref{max} discusses the proofs of the main results. Section \ref{quad} and Appendix \ref{app} conclude the paper with some remarks on the case of the quadratic growth and an auxiliary Calder\'on-Zygmund result for linear problems.
\section{Main results}\label{mainres}
From now on, $\Omega\subset\R^d$ will be a $C^{2}$ bounded domain. This is only one of the possible regularity conditions one can assume on the domain to ensure the simultaneous validity of the Sobolev's inequality as well as Calder\'on-Zygmund estimates for the Dirichlet problem. The latter will be recalled in Theorem \ref{cz}.\\
Our main results are the following: the first one deals with the subcritical regime $q>\frac{d(\gamma-1)}{\gamma}$, while the second one focuses on the critical threshold of summability $q=\frac{d(\gamma-1)}{\gamma}$. We will only detail the case $\lambda>0$, since
\[
-\Delta u+H(x,Du)=f(x)\implies -\Delta u+\lambda u+H(x,Du)=g(x),\ g(x)=f(x)+\lambda u.
\]
Therefore, in the case $\lambda=0$ bounds will depend also on the integrability properties of $u\in L^q$ and the result will then follow applying the integral estimates from \cite{GMP14,FM} in the case $\lambda=0$.
\begin{thm}\label{main}
Let $\Omega\subset\R^d$ be a $C^2$ bounded domain, $d\geq 1$, and $u\in W^{2,q}(\Omega)\cap W^{1,q}_0(\Omega)$, $q>\frac{d(\gamma-1)}{\gamma}$, $1+\frac2d\leq\gamma\leq 2$, be a strong solution to \eqref{hj} and assume \eqref{H}-\eqref{zeroth}. Suppose that
\[
f\in L^q(\Omega)\ ,q>\frac{d(\gamma-1)}{\gamma}.
\]
Then there exists a constant $M_1$ depending on $d,q,\gamma,C_H,|\Omega|,\lambda,\|f\|_{L^q(\Omega)}$ (and on $\|u\|_{L^q(\Omega)}$ when $\lambda=0$) such that
\[
\|u\|_{W^{2,q}(\Omega)}+\||Du|^\gamma\|_{L^q(\Omega)}\leq M_1
\]
\end{thm}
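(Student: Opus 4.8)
The plan is to combine maximal $L^q$-regularity for the linear Dirichlet problem with interpolation, reducing everything to a lower-order a priori estimate obtained by duality. First I would freeze the nonlinearity: writing \eqref{hj} as $-\Delta u + \lambda u = g$ with $g := f - H(x,Du)$, the Calder\'on-Zygmund theory of Theorem \ref{cz} gives $\|u\|_{W^{2,q}(\Omega)} \lesssim \|g\|_{L^q(\Omega)} \lesssim \|f\|_{L^q(\Omega)} + \||Du|^\gamma\|_{L^q(\Omega)} + 1$, using the upper bound in \eqref{H}. Thus the whole estimate hinges on controlling $\||Du|^\gamma\|_{L^q(\Omega)} = \|Du\|_{L^{q\gamma}(\Omega)}^\gamma$. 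The next step is the interpolation inequality of Gagliardo--Nirenberg type (respecting the homogeneous Dirichlet condition, so that no boundary terms appear): since $\gamma < 2$ and $q > d(\gamma-1)/\gamma$, one has $q\gamma$ in a range where $\|Du\|_{L^{q\gamma}(\Omega)}$ is interpolated between $\|D^2 u\|_{L^q(\Omega)}$ and a low-order norm $\|u\|_{L^m(\Omega)}$ (or $\|Du\|_{L^{m'}}$) with an exponent $\theta < 1$ on the top-order term. The point where $\theta<1$ is exactly the sub-quadratic, subcritical condition $1+\tfrac2d<\gamma<2$ together with \eqref{qdg}; it is what makes the perturbation argument close.

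With these two ingredients in hand, I would run a Young's-inequality absorption: from $\|u\|_{W^{2,q}} \lesssim 1 + \|Du\|_{L^{q\gamma}}^\gamma \lesssim 1 + \big(\|D^2u\|_{L^q}^\theta \,\mathcal{L}^{1-\theta}\big)^\gamma$, where $\mathcal{L}$ denotes the lower-order quantity, one can absorb the term $\|D^2 u\|_{L^q}^{\theta\gamma}$ into the left-hand side \emph{provided} $\theta\gamma < 1$. Checking that $\theta\gamma<1$ under the stated hypotheses on $\gamma$ and $q$ is a direct computation with the Nirenberg exponents. This yields
\[
\|u\|_{W^{2,q}(\Omega)} + \||Du|^\gamma\|_{L^q(\Omega)} \leq C\big(1 + \mathcal{L}\big)
\]
for a power $\mathcal{L}$ of a lower-order norm of $u$. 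It then remains to bound $\mathcal{L}$, i.e. a low-summability norm of $u$ (and possibly of $Du$), by a constant depending only on the listed parameters and $K$.

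This last reduction is where the duality method enters, and it is the genuine core of the argument: by Corollary \ref{finalint} (whose integral bounds are produced via the $L^p$-duality against the stationary Fokker--Planck equation \eqref{adjintro} with $b = -D_pH(x,Du)$, exploiting crucially $\lambda>0$ so no smallness on $f$ is needed), one obtains $\|u\|_{L^m(\Omega)} + \|Du\|_{L^{m'}(\Omega)} \leq C(K,d,q,\gamma,C_H,|\Omega|,\lambda)$ at the summability level that feeds the interpolation above. Plugging this into the displayed inequality gives $M_1$ with the claimed dependence and closes the proof. \textbf{The main obstacle} is not any single inequality but the bookkeeping of exponents: one must verify that the summability level at which the duality estimate of Corollary \ref{finalint} is available is \emph{compatible} with — indeed no worse than — the level demanded by the Nirenberg interpolation so that the condition $\theta\gamma<1$ holds; it is precisely here that the three standing restrictions $d>2$, $1+\tfrac2d<\gamma<2$, and $q>\tfrac{d(\gamma-1)}{\gamma}$ are all used, and getting them to line up simultaneously (rather than, say, needing $q>2$ as in \cite{CGell}) is the delicate point of the scheme.
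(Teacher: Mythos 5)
Your proposal follows essentially the same route as the paper's proof: linear Calder\'on--Zygmund regularity (Theorem \ref{cz}) reduces the estimate to bounding $\|Du\|_{L^{q\gamma}(\Omega)}^\gamma$, Gagliardo--Nirenberg interpolation between $\|D^2u\|_{L^q}$ and $\|u\|_{L^s}$ with exponent $\theta\gamma<1$ (made possible by choosing $s$ close to $\tfrac{d(\gamma-1)}{2-\gamma}$, using $q>d(\gamma-1)/\gamma$) allows Young-inequality absorption of the top-order term, and the residual lower-order norm is controlled by the duality bound of Corollary \ref{finalint}. The only small imprecision is your parenthetical suggestion that Corollary \ref{finalint} might also bound a gradient norm $\|Du\|_{L^{m'}}$: it supplies only the $L^p$ bound on $u$, but that is exactly what the interpolation step in the paper uses, so the scheme closes as you describe.
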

Our approach closely follows \cite{CGpar} and can be described as follows: first, considering  \eqref{hj} as a nonlinear Poisson equation, by linear maximal regularity and \eqref{H}, any strong solution to \eqref{hj} satisfies
\[
\|D^2 u\|_{L^q}\lesssim \||Du|\|_{L^{q\gamma}}^\gamma+\|f\|_{L^q}\ .
\]
Using standard Gagliardo-Nirenberg interpolation inequalities one gets
\[
\|Du\|_{L^{q\gamma}}^\gamma\lesssim \|D^2u\|_{L^q}^{\theta\gamma}\|u\|_{L^s}^{(1-\theta)\gamma}
\]
with $\theta\gamma<1$ when $s$ is suitably chosen. This reduces the maximal regularity estimate, through the application of the weighted Young's inequality, to a lower order estimate in Lebesgue spaces, see Corollary \ref{finalint}. The latter is accomplished by duality methods through the study of maximal regularity properties of stationary Fokker-Planck equations, cf Corollary \ref{corfpreg}.\\
The second main result deals with the critical value of summability: in this case, the above interpolation argument leads to the condition $\theta\gamma=1$, so that the Young's inequality does not allow to conclude the statement. It is based on a careful analysis of stability estimates in Lebesgue spaces, as stated in Proposition \ref{stabLpest} below.
\begin{thm}\label{main2}
Let $\Omega\subset\R^d$ be as in Theorem \ref{main}, $d\geq 1$, and $u\in W^{2,q}(\Omega)\cap W^{1,q}_0(\Omega)$, $1+\frac2d\leq\gamma<2$, be a solution to \eqref{hj} satisfying \eqref{H}-\eqref{zeroth} and
\[
f\in L^q(\Omega), q=\frac{d(\gamma-1)}{\gamma}.
\] 
Then there exists a constant $M_2$ depending on $q,\gamma,C_H,|\Omega|$ and $f$ such that
\[
\|u\|_{W^{2,q}(\Omega)}+\||Du|^\gamma\|_{L^q(\Omega)}\leq M_2.
\]
In particular, 
\begin{itemize}
\item[(i)] when $\lambda>0$ the constant $M_2$ depends also on $\lambda$ and does not depend only on $\|f\|_{L^q(\Omega)}$, but remains bounded when $f$ varies in a set which is bounded and equi-integrable in $L^q(\Omega)$;
\item[(ii)] if $\lambda=0$ the constant $M_2$ depends also on the equi-integrability properties of $u\in L^q(\Omega)$;
\end{itemize}
\end{thm}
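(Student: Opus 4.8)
The plan is to run, at the endpoint exponent, the same three-step scheme as for Theorem \ref{main} --- Calder\'on--Zygmund estimate, Gagliardo--Nirenberg interpolation, lower-order bound by duality --- and to replace the Young absorption (which now fails) by a stability argument exploiting the equi-integrability of $f$, in the spirit of \cite[Theorem 1.3]{CGpar}. The obstruction is the following. At $q=d(\gamma-1)/\gamma$ one computes $q^{*}=\tfrac{dq}{d-q}=q\gamma$, so the Sobolev embedding $W^{2,q}(\Omega)\hookrightarrow W^{1,q\gamma}(\Omega)$ is exactly critical, and the Gagliardo--Nirenberg inequality
\[
\|Du\|_{L^{q\gamma}(\Omega)}^{\gamma}\le C\,\|D^{2}u\|_{L^{q}(\Omega)}^{\theta\gamma}\,\|u\|_{L^{s}(\Omega)}^{(1-\theta)\gamma}
\]
that was used in Theorem \ref{main} is available only with $\theta\gamma=1$. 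Combined with the Calder\'on--Zygmund bound $\|u\|_{W^{2,q}(\Omega)}\le C(\||Du|^{\gamma}\|_{L^{q}(\Omega)}+\|f\|_{L^{q}(\Omega)}+1)$ of Theorem \ref{cz} (via \eqref{H}), this only gives $\|D^{2}u\|_{L^{q}}\le C\,\|D^{2}u\|_{L^{q}}\,\|u\|_{L^{s}}^{(1-\theta)\gamma}+C\|f\|_{L^{q}}+C$, and absorbing the first term would require $\|u\|_{L^{s}}$ to be not merely bounded but small. Recovering that smallness from the equi-integrability of $f$ is the content of Proposition \ref{stabLpest}.

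To this end I would fix $\delta>0$ and split $f=f_{1}+f_{2}$ with $f_{1}:=f\,\mathbbm{1}_{\{|f|\le N\}}$ and $f_{2}:=f\,\mathbbm{1}_{\{|f|>N\}}$, where $N=N(\delta)$ is chosen, uniformly over the given bounded, equi-integrable set, so that $\|f_{1}\|_{L^{\infty}(\Omega)}\le N$ and $\|f_{2}\|_{L^{q}(\Omega)}\le\delta$. Then, following the proofs of Propositions \ref{upiu}--\ref{umeno}, I would run the duality computation: testing \eqref{hj} and its adjoint \eqref{adjintro} --- with drift $b=-D_{p}H(x,Du)$ and datum $\delta_{x_0}$ --- against one another yields, for every interior point $x_0$, the representation
\[
u(x_0)=\int_{\Omega}f\,\rho_{x_0}\,dx+\int_{\Omega}\big(D_{p}H(x,Du)\cdot Du-H(x,Du)\big)\rho_{x_0}\,dx ,
\]
hence an $L^{\infty}$- (or $L^{s}$-) type control of $u$ in terms of $\sup_{x_0}\big|\int_{\Omega}f\,\rho_{x_0}\,dx\big|$, the first-order term being treated as in the subcritical case and $\|\rho_{x_0}\|_{L^{1}(\Omega)}$ being bounded (by $1/\lambda$) precisely because of \eqref{zeroth}. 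Splitting $\int f\,\rho_{x_0}=\int f_{1}\,\rho_{x_0}+\int f_{2}\,\rho_{x_0}$, the $f_{1}$-part is $\le N\,\|\rho_{x_0}\|_{L^{1}}\le C(\lambda)\,N$, while the $f_{2}$-part is $\le\|f_{2}\|_{L^{q}}\,\|\rho_{x_0}\|_{L^{q'}}\le\delta\,\|\rho_{x_0}\|_{L^{q'}}$; and the Fokker--Planck regularity of Corollary \ref{corfpreg} bounds $\|\rho_{x_0}\|_{L^{q'}(\Omega)}$ by a power of $\big\||Du|^{\gamma-1}\big\|_{L^{k}(\rho_{x_0}dx)}$, which --- by H\"older, the arithmetic relating $k$, $\gamma$ and $q$ that singles out the endpoint, and $\|\rho_{x_0}\|_{L^{1}}\le C$ --- is controlled by a power of $\||Du|^{\gamma}\|_{L^{q}(\Omega)}$, hence of $\|u\|_{W^{2,q}(\Omega)}$. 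This produces the critical stability estimate of Proposition \ref{stabLpest}, of the schematic form $\|u\|_{L^{s}(\Omega)}\le C(N)+C\,\delta^{\,\sigma}\,\|u\|_{W^{2,q}(\Omega)}^{\,\beta}$ with $\sigma>0$ and $\beta$ such that, once inserted into the degenerate interpolation above (and, if $\beta<1$, after one harmless further Young inequality), the coefficient multiplying $\|u\|_{W^{2,q}}$ is a positive power of $\delta$.

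The estimate is then closed as follows: choosing $\delta$ so small that this coefficient is $\le\tfrac12$ fixes $\delta$, hence $N$, allows the absorption into the left-hand side of the Calder\'on--Zygmund bound, and yields $\|u\|_{W^{2,q}(\Omega)}\le M_{2}$ with $M_{2}$ depending on $q,\gamma,C_{H},|\Omega|,\lambda$ and on $N=N(\delta)$, i.e. on $f$ only through the equi-integrability modulus of the set in which it ranges --- not through $\|f\|_{L^{q}}$ alone --- which is exactly the stated refinement. The bound on $\||Du|^{\gamma}\|_{L^{q}}$ then follows from the equation and \eqref{H}.

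I expect the hard part to be Proposition \ref{stabLpest}. Since $q=d(\gamma-1)/\gamma$ is precisely the threshold at which the feedback loop ``Sobolev regularity of $\rho_{x_0}$ $\leftrightarrow$ integrability of $b=-D_{p}H(x,Du)$ against $\rho_{x_0}$ $\leftrightarrow$ integrability of $|Du|^{\gamma}$'' closes, every exponent involved (the Sobolev embedding, the Gagliardo--Nirenberg interpolation, and the Calder\'on--Zygmund-type estimate for \eqref{adjintro}) sits at its endpoint, so there is no slack anywhere except the one manufactured from the equi-integrable tail of $f$. The delicate point is to verify that the Fokker--Planck estimate of Corollary \ref{corfpreg} degrades only in a controlled way, so that the power $\beta$ of $\|u\|_{W^{2,q}}$ is genuinely re-absorbable and the small factor is a positive power of $\delta$ rather than $\delta$ times an inverse power of $\delta$ concealed in the $N(\delta)$-dependent constants. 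This is the precise place where equi-integrability of $f$, and not mere $L^{q}$-boundedness, becomes indispensable, and why --- unlike in Theorem \ref{main} --- the conclusion cannot be phrased as a bound depending on $\|f\|_{L^{q}}$ only.
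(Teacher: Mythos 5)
Your overall philosophy --- run the same interpolation scheme, manufacture the missing slack at the endpoint from the equi-integrability of $f$ --- is the right one, and your identification of the obstruction ($\theta\gamma=1$ forces the $L^s$-bound to be \emph{small}, not merely finite) is correct. But the specific mechanism you propose has a genuine gap, and it is exactly at the point you flag as "the hard part.''

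You decompose the datum $f=f_1+f_2$ into a bounded part and a small $L^q$-tail, and then try to prove a stability estimate of the form $\|u\|_{L^s}\le C(N)+C\delta^{\sigma}\|u\|_{W^{2,q}}^{\beta}$. Even granting that such an estimate held, it would not close: inserting it into the degenerate Gagliardo--Nirenberg inequality and raising to the power $\gamma$ produces a term of the schematic form $C(N)^{\gamma-1}\,\|D^2u\|_{L^q}$, which is \emph{linear} in $\|D^2u\|_{L^q}$ with a coefficient $C(N)^{\gamma-1}$ that is bounded away from zero (and grows as $N\to\infty$, while shrinking $N$ blows up $\delta$). That term cannot be absorbed. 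More to the point, the duality computation you sketch (if carried out carefully with the test functions of Propositions \ref{upiu}--\ref{umeno}, not Dirac masses) actually yields, after absorbing the $\delta$-small piece into the Lagrangian term on the left, a plain \emph{bound} $\|u\|_{L^p}\le C(N,\lambda)$ with $p=d(\gamma-1)/(2-\gamma)$. This is finite but not small, and that is precisely the difference between the subcritical and critical cases. Also, your proposed bound of $\|\rho_{x_0}\|_{L^{q'}}$ "by H\"older, the arithmetic ... and $\|\rho_{x_0}\|_{L^1}\le C$'' in terms of $\||Du|^\gamma\|_{L^q}$ is circular/incorrect; the $\rho$-weighted integral $\int|D_pH(x,Du)|^{\gamma'}\rho\,dx$ is controlled through the convex-duality identity (Legendre transform) by testing the HJ equation against $\rho$, not by a H\"older step against a Lebesgue norm of $Du$.

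What the paper does, and what you are missing, is to decompose the \emph{solution}, not just the datum: it introduces the auxiliary solution $u_k$ of the Dirichlet problem \eqref{hjtrunc} with datum $T_k(f)\in L^\infty$, and sets $w=u-u_k$. Proposition \ref{stabLpest} then proves the two-solution stability estimate $\|w\|_{L^p(\Omega)}\le C\|f-T_k(f)\|_{L^q(\Omega)}$ (with $C$ depending on $f$ only through the truncation level $h$ dictated by equi-integrability). This is what makes $\|w\|_{L^p}$ genuinely small when $k$ is large. One then applies Calder\'on--Zygmund and the Lipschitz estimate $|H(x,Du_k)-H(x,Du)|\lesssim|Du_k|^\gamma+|Dw|^\gamma+1$ to the \emph{difference equation} satisfied by $w$, and runs the degenerate Gagliardo--Nirenberg inequality on $w$: the resulting absorption factor is $\|f-T_{\bar k}(f)\|_{L^q}^{\gamma-1}$, which is small by equi-integrability, not $C(N)^{\gamma-1}$. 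Finally $\|Du_{\bar k}\|_{L^{q\gamma}}$ is estimated by Theorem \ref{main} applied at any integrability $\bar q>q$ (legitimate because $T_{\bar k}(f)\in L^\infty$), and one concludes by $\|Du\|_{L^{q\gamma}}\le\|Dw\|_{L^{q\gamma}}+\|Du_{\bar k}\|_{L^{q\gamma}}$. The moral: at the endpoint, equi-integrability must be converted into smallness of the \emph{solution difference} $u-u_k$, which requires the companion problem with truncated datum; truncating $f$ alone does not propagate smallness to $u$.
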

Some remarks on the above results are now in order. In the sequel, $\gamma'$ will denote the H\"older conjugate exponent of $\gamma$, i.e. $\gamma'=\frac{\gamma}{\gamma-1}$.
\begin{rem}\label{remfull}
The dependence on $u\in L^q$ in the statement (ii) of Theorem \ref{main2} can be removed exploiting the estimates obtained in e.g. Theorem 1.1-(ii) of \cite{GMP14} applied with $p=2$, see also \cite{FM}, under a smallness condition on $f$ (cf. condition (1.5) in \cite{GMP14}). This agrees with the statement of Theorem 3.1-(i) of \cite{CGL}. Similarly, when $\lambda=0$, the estimate can be made independent of the norm $\|u\|_{L^q}$ even in Theorem \ref{main} using e.g. Theorem 3.8 in \cite{GMP14}.
\end{rem}
\begin{rem}
In Theorems \ref{main} and \ref{main2} we will only focus on the case $d>2$. Indeed, a careful inspection of the proofs shows that the lower dimensional cases $d=1$ and $d=2$ are simpler to address due to the embedding of $W^{1,2}$ into any $L^p$ space for finite $p$. 
\end{rem}
\begin{rem}\label{nondiv}
The results of this manuscript can be extended to more general second order diffusions of the form $-\mathrm{Tr}(A(x)D^2u)$. We emphasize that some control on the derivatives of $A$ is needed to study the regularity of the dual Fokker-Planck equation via the auxiliary problem \eqref{adjintro}, while Theorem \ref{cz} merely requires the leading coefficients to be continuous in the whole domain, cf \cite[Lemma 9.17]{GT}.
Partial results with Sobolev regularity assumptions on the diffusion matrix $A$ already appeared in \cite{BardiPerthame} through the integral Bernstein method to prove Sobolev estimates for problems with first order terms having natural growth, under further integrability conditions on the source term. It is still unclear which are the minimal regularity requirements for the validity of the maximal $L^q$-regularity for viscous Hamilton-Jacobi equations driven by operators in non-divergence form. However, being our methods of variational nature, extensions to more general diffusions of the form $-\mathrm{Tr}(A(x)D^2u)$ would be possible provided that the coefficients are smooth enough, e.g. Lipschitz continuous, since $a_{ij}\partial_{ij}u=\partial_i(a_{ij}\partial_ju)-\partial_ia_{ij}\partial_ju$, which in turn results in the presence of an additional transport term with a bounded coefficient. 
\end{rem}
\begin{rem}
The previous main results can be extended to non-homogeneous boundary conditions via the corresponding results for the linear problem, see e.g. \cite[Theorem 9.15]{GT} or \cite[Section 5]{ChenWu}. Moreover, the techniques to prove Theorems \ref{main} and \ref{main2} apply identically to problems with zero-th order terms posed on the flat torus $\T\equiv \R^d/\Z^d$ typical of the ergodic control setting, i.e. with periodic data, or with other boundary conditions (e.g. of Neumann and mixed type, see \cite[Section 7.1]{GMP14}), leading to new results even in these contexts. Still, we emphasize that the same strategy presented here could also be extended with appropriate modifications to study parabolic Cauchy-Dirichlet problems in the subquadratic regime through the corresponding result for the linear problem. In this case, the parabolic dimension $d+2$ would replace the dimension $d$ of the state space in the integrability conditions, see \cite{CGpar} for additional details.
\end{rem}

\section{Remarks on the growth of the first-order term and the integrability of $f$}
At this stage, it is worth recalling that the growth regime of the parameter $\gamma$ as well as the integrability $q$ play a crucial role to investigate the regularity properties of our nonlinear Dirichlet problem. 
 First, since the integrability of $f$ must be also larger than one, we will need to assume 
 \[
 q\geq\max\left\{\frac{d(\gamma-1)}{\gamma},1\right\}.
 \]
As a consequence, as far as the growth of the first order term is concerned, below the natural growth regime $\gamma=2$ one usually identifies the other two critical exponents $\gamma=\frac{d}{d-1}$ and $\gamma=1+\frac{2}{d}$. The former appears from the exponent $q=\frac{d(\gamma-1)}{\gamma}$, which satisfies $q\geq1$ whenever $\gamma\geq \frac{d}{d-1}$. The latter can be identified arguing by linearization: indeed, \eqref{hj} can be written as
 \[
 -\Delta u+|Du|^{\gamma-2}Du\cdot Du=f(x)\in L^q
 \]
 so that setting $B(x)=|Du|^{\gamma-2}Du$, one could achieve maximal $L^q$-regularity when $B(x)\in L^r(\Omega)$, $r\geq d$ (cf \cite{GT}). Starting with energy solutions $u\in W^{1,2}_0$ we would conclude $B\in L^{\frac{2}{\gamma-1}}$. Therefore, we need
 \[
 \frac{2}{\gamma-1}\geq d\implies \gamma\leq 1+\frac{2}{d}.
 \]
 Such a linearization argument has been already used to study uniqueness of solutions \cite{BarlesPorretta} and integral estimates coming from maximal regularity in \cite{GMP14}. Therefore, it is natural to distinguish among the regimes
 \[
 0<\gamma\leq 1\ (\text{ i.e. sublinear/linear growth});
 \] 
 \[
 1<\gamma<\frac{d}{d-1} \left(\implies q=1\right);
 \]
\[
\frac{d}{d-1}<\gamma<1+\frac{2}{d}\ \left(\iff 1<q<\frac{2d}{d+2}=(2^*)'\right);
\]
\[
1+\frac{2}{d}\leq\gamma< 2\ \left(\iff (2^*)'=\frac{2d}{d+2}\leq q<\frac{d}{2}\right),
 \]
$2^*=\frac{2d}{d-2}$ being the Sobolev exponent. In particular, the last regime allows to use the energy formulation of the problem. Indeed, considering strong solutions in the standard sense of \cite[Chapter 9]{GT} for \eqref{hj}, i.e. such that $u\in W^{2,q}(\Omega)\cap W^{1,q}_0(\Omega)$, we would have the embedding
\[
W^{2,q}\hookrightarrow W^{1,2}\ ,
\]
which occurs when $q\geq \frac{2d}{d+2}$. Then, $\frac{d}{\gamma'}\geq \frac{2d}{d+2}$ precisely when $\gamma\geq 1+\frac{2}{d}$. When $\gamma$ is slowly increasing, precisely below $\gamma= 1+\frac{2}{d}$, one falls outside the energy range, as the datum $f$ may not belong to $W^{-1,2}(\Omega)$, and needs different notions of solutions, such as renormalized solutions or those obtained as limit of approximations, see \cite[Section 4]{GMP14} and \cite{MercaldoCPAA}.\\
Despite all the main results Theorems \ref{main} and \ref{main2} could have been treated in the full range $\gamma\in(1,2)$, we will only detail the case within the energy range $1+\frac{2}{d}\leq \gamma<2$, since the former have been already discussed in \cite{Napoli} and in Theorems 3.6-3.8 of \cite{MercaldoCPAA}, even for problems with data in Lorentz spaces. However, one can address the maximal $L^q$-regularity following the path of Section \ref{max} through the integral estimates proved in e.g. \cite{GMP14} when working in the regime $1<\gamma<1+\frac{2}{d}$, so we omit this analysis. Still, we do not detail the case $\gamma>2$, as it requires estimates at the level of H\"older spaces, see Section \ref{quad}.

\section{The case $1+\frac{2}{d}\leq \gamma<2$}
\subsection{Preliminary results for stationary Fokker-Planck equations}\label{estfp}
\subsubsection{Global Sobolev regularity for the Dirichlet problem}
In this section we give some global regularity results for the (dual) Dirichlet problem
\begin{equation}\label{fp}
\begin{cases}
-\Delta \rho(x)+\lambda \rho(x)+\mathrm{div}(b(x)\rho(x))=\psi(x)& \text{in $\Omega$,} \\
\rho(x) = 0 & \text{on $\partial\Omega$.}
\end{cases}
\end{equation}
Here, $\psi$ should be thought as a $L^p$ approximation of a Dirac delta, for $p$ to be specified later, cf \cite{Evansacc}. We denote, as usual, by $W^{k,p}_0(\Omega)$ the closure of $C_0^k(\Omega)$ in $W^{k,p}(\Omega)$, while we consider weak solutions to \eqref{fp} belonging to $W_0^{1,2}(\Omega)$ in the sense that the following identity holds
\[
\int_\Omega D\rho\cdot D\varphi\,dx+\lambda\int_\Omega \rho\varphi\,dx-\int_\Omega b\rho\cdot D\varphi\,dx=\int_{\Omega} \psi\varphi\,dx\ ,\forall \varphi\in W_0^{1,2}(\Omega)\ .
\]
We start with a well-known well-posedness result for the Dirichlet problem.
\begin{prop}\label{well}
Let $\Omega\subset\R^d$, $d>2$, be a bounded domain, $b\in L^p(\Omega)$, $p\geq d$, and $\psi\in L^\infty(\Omega)$. Then, there exists a unique weak solution $\rho\in W^{1,2}_0(\Omega)$ of the Dirichlet problem \eqref{fp}. When $b\in L^p(\Omega)$, $p> d$, $\rho\in L^\infty(\Omega)$, while when $p=d$ one has $\rho\in L^r(\Omega)$ for any finite $r>1$. Moreover, if $\psi\geq0$ in $\Omega$, then $\rho\geq0$ in $\Omega$. Finally, if $\psi\in L^1(\Omega)$ we have $\|\rho\|_{L^1(\Omega)}\leq \frac{\|\psi\|_{L^1(\Omega)}}{\lambda}$.
\end{prop}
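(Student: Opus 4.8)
The plan is to transfer the analysis to the adjoint operator, which has the favourable zero-th order coefficient $\lambda>0$ and is therefore amenable to maximum principle arguments. Denote by $L\rho:=-\Delta\rho+\lambda\rho+\mathrm{div}(b\rho)$ the operator associated with \eqref{fp}, viewed as a bounded map $W^{1,2}_0(\Omega)\to W^{-1,2}(\Omega)$, and by $L^\ast v:=-\Delta v+\lambda v-b\cdot Dv$ its formal $L^2$-adjoint. I would first observe that $L$ is Fredholm of index zero: writing $b=b_\delta+b^\delta$ with $b_\delta\in L^\infty(\Omega)$ and $\|b^\delta\|_{L^d(\Omega)}\leq\delta$ (possible by absolute continuity of the integral, and it is here that the \emph{strong} $L^d$-integrability of $b$ is used), Sobolev's embedding $W^{1,2}_0\hookrightarrow L^{2^\ast}$ shows that $\rho\mapsto\mathrm{div}(b^\delta\rho)$ has operator norm $O(\delta)$, so $(-\Delta+\lambda)+\mathrm{div}(b^\delta\,\cdot)$ is invertible for $\delta$ small, while $\rho\mapsto\mathrm{div}(b_\delta\rho)$ is compact from $W^{1,2}_0$ into $W^{-1,2}$ by Rellich's theorem.

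Consequently, existence and uniqueness of a weak solution $\rho\in W^{1,2}_0(\Omega)$ are equivalent, via the Fredholm alternative, to the triviality of $\ker L^\ast$, i.e. to the assertion that the only $v\in W^{1,2}_0(\Omega)$ with $-\Delta v+\lambda v-b\cdot Dv=0$ is $v\equiv 0$ (and the same reduction yields surjectivity of $L^\ast$ onto $W^{-1,2}$, which is needed below). \textbf{This is the step I expect to be the main obstacle}: for a drift merely in $L^d$ one is in a borderline regime, and I would invoke the known maximum-principle and uniqueness results for second order equations with drift in strong $L^d$—where one again exploits that $\|b\|_{L^d}$ is small on sets of small measure in order to run a localization argument; when $p>d$ the drift is subcritical and this is routine.

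The nonnegativity and the $L^1$ bound I would obtain by duality. For $g\in L^\infty(\Omega)$ with $g\geq 0$, let $v_g\in W^{1,2}_0(\Omega)$ solve $L^\ast v_g=g$; comparison with the constant supersolution $\|g\|_{L^\infty}/\lambda$ (where $\lambda>0$ is essential) gives $0\leq v_g\leq\|g\|_{L^\infty}/\lambda$. Using $v_g$ as test function in the weak formulation of \eqref{fp} and $\rho$ as test function in that of $L^\ast v_g=g$, and noticing that the transport terms agree pointwise ($b\rho\cdot Dv_g=\rho\,(b\cdot Dv_g)$) so that the two left-hand sides coincide, one gets $\int_\Omega\rho\,g\,dx=\int_\Omega\psi\,v_g\,dx$. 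If $\psi\geq 0$, the right-hand side is nonnegative for every admissible $g\geq 0$, hence $\rho\geq 0$; taking $g\equiv 1$, and decomposing a general datum as $\psi=\psi^+-\psi^-$ with the corresponding splitting of $\rho$ by linearity and uniqueness, yields $\|\rho\|_{L^1(\Omega)}\leq\lambda^{-1}\|\psi\|_{L^1(\Omega)}$.

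Finally, the higher integrability follows by reading \eqref{fp} as $-\Delta\rho+\lambda\rho=\psi-\mathrm{div}(b\rho)$ with $\psi\in L^\infty(\Omega)$. When $p>d$, I would run a Stampacchia truncation—testing with $(\rho-k)^+$ and $(\rho+k)^-$—and control the drift contribution through $\|b\,\mathbbm{1}_{\{|\rho|>k\}}\|_{L^d}\leq\|b\|_{L^p}\,|\{|\rho|>k\}|^{\frac1d-\frac1p}$, which tends to $0$ as $k\to\infty$ since $\rho\in L^{2^\ast}\subset L^1$; for $k$ large this factor is small enough to be absorbed via Sobolev's inequality, and the resulting level-set inequality has admissible exponents precisely because $p>d$, so Stampacchia's lemma gives $\rho\in L^\infty(\Omega)$. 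When $p=d$ this decay is no longer available, but a Moser-type iteration with test functions $|\rho|^{2\beta}\rho$, splitting the drift as above with $\delta=\delta(\beta)$ small and absorbing the bounded part against the Sobolev norm of $|\rho|^{\beta+1}$, improves the summability exponent by the fixed factor $d/(d-2)$ at each step; since the constants degenerate as $\beta\to\infty$, this produces $\rho\in L^r(\Omega)$ for every finite $r$ but not $L^\infty$, in agreement with the statement.
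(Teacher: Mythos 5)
Your argument is correct in outline but takes a genuinely different route from the paper, whose proof is almost entirely bibliographic: existence, uniqueness and positivity are quoted from Bogachev--Krylov--R\"ockner--Shaposhnikov (Prop.\ 2.1.4, Thm.\ 2.1.8 and Thm.\ 2.2.1 of their monograph), the $L^\infty$ and $L^r$ bounds are referred to the Boccardo-type $\log(1+|\rho|)\in W^{1,2}$ estimate, and the $L^1$ bound to a lemma of Boccardo proved by a direct test-function computation (which in fact only needs $b\in L^2$). You instead build a self-contained scheme: Fredholmness of index zero via the splitting $b=b_\delta+b^\delta$ with $b_\delta\in L^\infty$ and $\|b^\delta\|_{L^d}$ small (a clean way to isolate where strong $L^d$-integrability is used), reduction of unique solvability to $\ker L^\ast=\{0\}$ for the non-divergence adjoint, and then a single duality identity $\int_\Omega\rho\,g\,dx=\int_\Omega\psi\,v_g\,dx$ combined with the constant barrier $\|g\|_{L^\infty}/\lambda$ for $L^\ast v_g=g$ to deliver both the sign and the $\lambda^{-1}\|\psi\|_{L^1}$ bound at once --- a more structural derivation than the paper's. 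For the higher integrability you replace the cited $\log(1+|\rho|)$ estimate by a Stampacchia truncation when $p>d$ and a Moser iteration when $p=d$, both perfectly viable here. The one step you rightly flag as the bottleneck is the borderline weak maximum principle (and hence triviality of $\ker L^\ast$, and the $L^\infty$ comparison for $v_g$) for a drift merely in $L^d$: the paper sidesteps this by leaning on the sharp results in the Bogachev et al.\ monograph, whereas in your version it is an external ingredient that would need to be supplied --- via the same $b=b_\delta+b^\delta$ splitting together with an absorption argument on small level sets --- to make the proof fully self-contained. Aside from that acknowledged reliance, the proposal is sound and arguably more illuminating about the roles of $\lambda>0$ and of strong versus weak $L^d$-integrability of $b$.
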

\begin{proof}
The existence and uniqueness follow from \cite[Proposition 2.1.4]{BKRS} combined with \cite[Theorem 2.1.8]{BKRS} (applied with $\gamma=0$ and $\beta\equiv0$, which requires $\lambda\geq0$ using the notation of this paper, cf \cite[Remark 2.1.10]{BKRS}), while the positivity is a consequence of \cite[Theorem 2.2.1-(i) and (iii)]{BKRS}. The $L^\infty$ estimates when $b\in L^p$, $p>d$, are standard and can be obtained through the estimate of $\log(1+|u|)\in W^{1,2}$, cf \cite[Theorem 5.6]{BocUMI} or \cite[Theorem 2.1]{BOPpar}, while the estimates in $L^r(\Omega)$ when $b\in L^d$ can be obtained arguing as in \cite[p. 414]{BOPpar}, see also \cite[Theorem 5.5]{BocUMI}.
The last assertion 
\begin{equation}\label{rhoL1}
\int_\Omega \rho\leq \frac{\int_\Omega \psi}{\lambda}
\end{equation}
was proved in e.g. \cite[Lemma 4.1]{BocJDE} (where it is enough to have $b\in L^2(\Omega)$). 
%

\end{proof}
We now prove the following regularity result for solutions to the adjoint problem \eqref{fp} with Dirichlet boundary conditions in terms of an integrability information of the drift term against the solution itself, cf \cite{cg20,CCPDE} for deeper results on the subject and the importance in connection with Mean Field Games. The proof is inspired by some ideas already appeared in \cite{MPRell} (cf \cite{BKR} for a different approach), see also \cite{cg20,CGpar} for related results for the parabolic problem along with the general reference \cite{BKRS}. Note that here the right-hand side term $\psi$ plays the same role of the terminal datum of the backward adjoint problem in the parabolic case analyzed in \cite{CGpar}. Our result extends with a different proof those in \cite{BKR} and provides, in addition, an explicit control on the size of the Sobolev and Lebesgue norms.
\begin{prop}\label{regfp1}
Let $1<\sigma'<d$. Let then $\psi\in L^{p'}(\Omega)$ and $|b|\in L^k(\Omega;\rho\,dx)$ with $k=1+\frac{d}{\sigma}$, with $k,\sigma,p$ satisfying the following conditions
\begin{itemize}
\item $p=\frac{d}{k-2}$ when $\sigma'>\frac{d}{d-1}$ with $2<k<1+\frac{d}{2}$;
\item $p'=1$ when $1<\sigma'<\frac{d}{d-1}$;
\item any $p'<\infty$ when $\sigma'=\frac{d}{d-1}$.
\end{itemize}
Then, every nonnegative weak solution to the Dirichlet problem \eqref{fp}  satisfies the estimate
\[
\|\rho\|_{L^{\frac{d}{d-k}}(\Omega)}+\|D\rho\|_{L^{\sigma'}(\Omega)}\leq C\left(\int_\Omega |b|^k\rho\,dx +\|\psi\|_{L^{p'}(\Omega)}+1\right)
\]
where $C$ depends only on $d,\sigma',|\Omega|$ when $\sigma'\geq\frac{d}{d-1}$, while it depends also on $\lambda$ when $\sigma'< \frac{d}{d-1}$.
\end{prop}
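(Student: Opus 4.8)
The plan is to estimate $\|D\rho\|_{L^{\sigma'}}$ and $\|\rho\|_{L^{d/(d-k)}}$ by testing the weak formulation of \eqref{fp} against a suitable function and then invoking linear elliptic regularity for the ``good'' part of the equation. Concretely, I would regard $\rho$ as a solution of $-\Delta\rho+\lambda\rho = \psi - \operatorname{div}(b\rho)$ and apply dual Sobolev/Calder\'on–Zygmund estimates. The key is to bound $\|b\rho\|_{L^m}$ for an appropriate $m$: by H\"older with exponents $k$ and $k'$ on $|b\rho| = |b|\rho^{1/k}\cdot\rho^{1/k'}$, one gets $\|b\rho\|_{L^m} \lesssim \left(\int_\Omega |b|^k\rho\,dx\right)^{1/k}\|\rho\|_{L^{m k'/(k-m k'+\dots)}}^{\dots}$ — more precisely, write $|b|\rho = (|b|^k\rho)^{1/k}\rho^{1-1/k}$ and use H\"older so that $\int |b|^m\rho^m = \int (|b|^k\rho)^{m/k}\rho^{m - m/k}$, which for $m/k + (m-m/k)/r' = 1$ with $r = $ some Lebesgue exponent yields $\|b\rho\|_{L^m}^m \leq \left(\int |b|^k\rho\right)^{m/k}\|\rho\|_{L^{r(m-m/k)}}^{m-m/k}$. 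One then chooses $m$ and the target exponents so that the elliptic estimate $\|D\rho\|_{L^{\sigma'}}\lesssim \|\psi\|_{L^{p'}} + \|b\rho\|_{L^m}$ (using the $W^{1,\sigma'}$ estimate for the Dirichlet Laplacian, available since $\sigma' < d$, which is exactly the role of the hypothesis $1<\sigma'<d$) closes, with the Sobolev embedding $W^{1,\sigma'}_0 \hookrightarrow L^{(\sigma')^*}$ controlling $\|\rho\|_{L^{d/(d-k)}}$ because $(\sigma')^* = d\sigma'/(d-\sigma')$ should match $d/(d-k)$ through the relation $k = 1 + d/\sigma$.

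The computation of exponents is the heart of the matter. Setting $k = 1+d/\sigma$, i.e. $\sigma = d/(k-1)$, one has $\sigma' = d/(d-k+1)$, and Sobolev gives $(\sigma')^* = d\sigma'/(d-\sigma') = d/(d-k)$, which is precisely the claimed Lebesgue exponent for $\rho$; so the two sides of the estimate are consistent and the whole argument reduces to a fixed-point / absorption scheme for $\|\rho\|_{L^{d/(d-k)}}$. I would run this as follows: choose $m < \sigma'$ so that $m^* := dm/(d-m) = d/(d-k)$ as well — i.e. $m = d/(2d-k - \dots)$, solving $dm/(d-m) = d/(d-k)$ gives $m = d/(2d - dk/d\cdot\ldots)$; in any case $m$ is the exponent such that the second-derivative estimate gives $\rho \in W^{2,m}\hookrightarrow L^{d/(d-k)}$ — and then the H\"older split above produces $\|b\rho\|_{L^m} \lesssim \left(\int|b|^k\rho\right)^{1/k}\|\rho\|_{L^{d/(d-k)}}^{1-1/k}$, where the exponent $1-1/k < 1$ is what makes the absorption via Young's inequality work. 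After absorbing the $\|\rho\|_{L^{d/(d-k)}}$ term one is left with the stated bound. The case distinction on $\sigma'$ versus $d/(d-1)$ corresponds to whether $m \geq 1$ can be chosen (giving $p = d/(k-2)$ when $k > 2$), whether one only needs $\psi\in L^1$ together with the $L^1$ bound $\|\rho\|_{L^1}\leq \|\psi\|_{L^1}/\lambda$ from Proposition \ref{well} (the regime $\sigma' < d/(d-1)$, i.e. $k < 2$, where $\lambda>0$ enters), and the borderline logarithmic case $\sigma' = d/(d-1)$ requiring $\psi$ in every $L^{p'}$.

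Let me indicate the three regimes more carefully. When $2 < k < 1 + d/2$ (equivalently $\sigma' > d/(d-1)$), $m$ as above satisfies $1 < m < \sigma' < d$; one uses the $W^{2,m}$ Calder\'on–Zygmund estimate (Theorem \ref{cz}), needs $\psi \in L^m$, and checking $m = d/(k-2) \cdot (\text{something})$ — in fact the duality bookkeeping gives exactly $p = d/(k-2)$ so that $\psi\in L^{p'}$ with $p' = d/(2d-dk+d) \dots$ suffices; the constant depends only on $d,\sigma',|\Omega|$ because no use of $\lambda$ beyond $\lambda \geq 0$ is made. When $1 < k < 2$, i.e. $\sigma' < d/(d-1)$, the term $\|\rho\|_{L^1}$ already controls everything after using $\|\rho\|_{L^1}\leq\|\psi\|_{L^1}/\lambda$; here $\lambda > 0$ is essential and enters the constant. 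The borderline $k = 2$ ($\sigma' = d/(d-1)$) is handled by taking $m$ slightly below the critical value, losing an arbitrarily small amount of integrability on $\psi$, whence ``any $p' < \infty$''. The main obstacle, and the step to write out with care, is the H\"older split producing the sublinear power $\|\rho\|^{1-1/k}$ together with verifying that the chosen intermediate exponent $m$ indeed satisfies $1 \le m < \sigma'$ and that $\rho$ a priori lies in $L^{d/(d-k)}$ (so that the absorption is legitimate and not circular) — this last point one secures either by Proposition \ref{well} (which already gives $\rho \in L^r$ for all finite $r$ when $|b|\in L^d$, and the hypotheses here are weaker, so a truncation/approximation argument is needed) or by first running the estimate with a truncated drift $b_n = b\mathbbm{1}_{\{|b|\le n\}}$ and passing to the limit.
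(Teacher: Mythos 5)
Your strategy for the regime $\sigma' > \frac{d}{d-1}$ is genuinely different from the paper's. You propose to run everything through linear elliptic theory applied to $-\Delta\rho + \lambda\rho = \psi - \operatorname{div}(b\rho)$, whereas the paper uses a direct variational estimate in that regime: it tests the weak formulation against $\rho^\beta$ with $\beta = \frac{k-2}{d-k} \in (0,1)$, applies Sobolev's inequality to $\int|D\rho^{(\beta+1)/2}|^2$, and never invokes Calder\'on--Zygmund theory. This makes the $\lambda$-independence of the constant transparent, since the term $\frac{\lambda}{\beta}\int\rho^{\beta+1}$ appears with the good sign and is simply discarded. The paper only switches to the Schechter $W^{1,\sigma'}$ estimate in the regime $\sigma' \le \frac{d}{d-1}$, where it is forced to control the extra $\|\rho\|_{L^{\sigma'}}$ term appearing on the right of that estimate by interpolating with $\|\rho\|_{L^1}\le\|\psi\|_{L^1}/\lambda$, which is exactly where $\lambda$ enters.

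Your unified linear-theory route is viable, and your H\"older split exhibiting the sublinear power $1-1/k$ is exactly the right structural observation, but two concrete errors need repair. First, you waver between a $W^{1,m}$ estimate and a $W^{2,m}$ Calder\'on--Zygmund estimate (Theorem \ref{cz}); the latter is unavailable here because the forcing $\psi-\operatorname{div}(b\rho)$ lives a priori only in $W^{-1,m}$, and $\operatorname{div}(b\rho)\in L^m$ would require $D(b\rho)\in L^m$, far beyond the hypotheses. The correct tool is a $W^{-1,\sigma'}\to W^{1,\sigma'}_0$ estimate. Second, your exponent bookkeeping is inconsistent: you write $m^*:=dm/(d-m)=d/(d-k)$ and claim $m<\sigma'$, but solving gives $m=d/(d-k+1)=\sigma'$ exactly; the exponent $m=d/(d-k+2)<\sigma'$ you seem to have in mind only arises from the $W^{2,m}\hookrightarrow L^{dm/(d-2m)}$ embedding, which you cannot use. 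Once you fix $m=\sigma'$ throughout, the split $\|b\rho\|_{L^{\sigma'}}\le(\int_\Omega|b|^k\rho\,dx)^{1/k}\|\rho\|_{L^{d/(d-k)}}^{1-1/k}$ and the subsequent Young absorption close exactly as you intend, and the duality $L^{p'}\hookrightarrow W^{-1,\sigma'}$ holds precisely with $p=d/(k-2)$. Finally, your claim that the constant is $\lambda$-independent in this case ``because no use of $\lambda$ beyond $\lambda\ge0$ is made'' needs to be justified: the Schechter estimate as cited in the paper carries a $\|\rho\|_{L^{\sigma'}}$ term with a $\lambda$-dependent constant, and to suppress it one has to argue that $-\Delta+\lambda$ on a bounded domain with Dirichlet data admits a $W^{-1,\sigma'}\to W^{1,\sigma'}_0$ bound uniform in $\lambda\ge0$ (true via the invertibility of $-\Delta$ alone, but this should be spelled out, and it is precisely the step the paper's variational proof bypasses). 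Your treatment of the regime $\sigma'<\frac{d}{d-1}$ and your remark that a truncation/approximation step is needed to make the absorption non-circular are both correct and match what the paper does.
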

\begin{proof}
We first discuss the case $\sigma'>\frac{d}{d-1}$ using a variational argument. Let $\beta=\frac{k-2}{d-k}$ and use the test function $\varphi=\rho^\beta$. However, since $\beta\in(0,1)$, it follows that $D\varphi$ may not be in $L^2$, and hence one has to make the argument rigorous by taking $\varphi=(\delta+\rho)^\beta-\delta^\beta\in W_0^{1,2}$, where $\delta>0$, and then let $\delta\to0$. For the sake of presentation, we derive some formal estimates using $\varphi=\rho^\beta$ as a test function in the weak formulation of \eqref{fp}. We have
\[
\int_\Omega D\rho\cdot D(\rho^\beta)\,dx-\int_\Omega \rho b\cdot D(\rho^\beta)\,dx+\lambda\int_\Omega \rho^{\beta+1}\,dx=\int_\Omega \rho^\beta \psi\,dx\ ,
\]
which is equivalent to
\[
\int_\Omega |D\rho|^2\rho^{\beta-1}\,dx+\frac{\lambda}{\beta}\int_\Omega \rho^{\beta+1}\,dx
\leq\int_\Omega |b|\rho^\beta|D\rho|\,dx+\frac1\beta\int_\Omega \rho^\beta\psi\,dx\ .
\]
We then use the generalized Young's inequality on the first term of the right-hand side to get
\begin{multline}\label{mainfp}
\frac12\int_\Omega |D\rho|^2\rho^{\beta-1}\,dx+\frac12\int_\Omega |D\rho|^2\rho^{\beta-1}\,dx+\frac{\lambda}{\beta}\int_\Omega \rho^{\beta+1}\,dx\leq\frac14\int_\Omega |D\rho|^2\rho^{\beta-1}\,dx\\
+\int_\Omega\rho^{\beta+1}|b|^2\,dx  +\frac1\beta\int_\Omega \rho^\beta\psi\,dx\ .
\end{multline}
As for the left-hand side, we observe that by Sobolev's inequality we have
\[
\int_\Omega |D\rho|^2\rho^{\beta-1}\,dx=c_\beta\int_\Omega |D\rho^{\frac{\beta+1}{2}}|^2\\
\geq c_{\beta,d}\left(\int_\Omega \rho^{(\beta+1)\frac{d}{d-2}}\,dx\right)^{1-\frac2d}.
\]
Then, writing $\int_\Omega\rho^{\beta+1}|b|^2\,dx=\int_\Omega |b|^2\rho^\frac{2}{k}\rho^{\beta+\frac{k-2}{k}}\,dx$, we first apply H\"older's inequality with exponents $\left(\frac{k}{2},\frac{k}{k-2}\right)$ and $(p,p')$, $p=\frac{d}{k-2}$, respectively to the first and second term of the right-hand side of \eqref{mainfp}, and then the generalized Young's inequality with the pairs of conjugate exponents $\left(\frac{(d-2)k}{2(d-k)},\frac{(d-2)k}{d(k-2)}\right)$ and $\left(\frac{d-2}{d-k},\frac{d-2}{k-2}\right)$ respectively to get
\begin{multline*}
\int_\Omega \rho^{\beta+1}|b|^2\,dx+\frac1\beta\int_\Omega \rho^\beta\psi\,dx \\
\leq \left(\int_\Omega |b|^k\rho\,dx\right)^{\frac2k}\left(\int_\Omega \rho^{\beta\frac{k}{k-2}+1}\,dx\right)^{1-\frac2k}+\frac1\beta\|\psi\|_{L^{p'}(\Omega)}\left(\int_\Omega\rho^{\beta p}\right)^\frac1p\\
\leq \tilde c_{d,\beta}\left[\left(\int_\Omega |b|^k\rho\,dx\right)^\frac{d-2}{d-k}+\|\psi\|_{L^{p'}(\Omega)}^\frac{d-2}{d-k} \right] +\frac{c_{d,\beta}}{8}\left(\int_\Omega \rho^{\beta\frac{k}{k-2}+1}\,dx\right)^{1-\frac2d}+\frac{c_{d,\beta}}{8}\left(\int_\Omega\rho^{\beta p}\right)^{1-\frac2d}.
\end{multline*}
One immediately checks the validity of the following chain of identities
\[
\frac{d}{d-k}=(\beta+1)\frac{d}{d-2}=\beta\frac{k}{k-2}+1=\beta p\ .
\]
Then, for some positive constant $C$ depending solely on $d,\beta$ we get
\[
\left(\int_\Omega \rho^{\frac{d}{d-k}}\,dx\right)^\frac{d-2}{d}+\frac14\int_\Omega \rho^{\beta-1}|D\rho|^2\,dx\leq C\left(\int_\Omega |b|^k\rho\,dx+\|\psi\|_{L^{p'}(\Omega)}\right)^\frac{d-2}{d-k}
\]
giving the desired estimate on $\rho\in L^{\frac{d}{d-k}}(\Omega)$. Exploiting the fact that $\beta\in(0,1)$ we conclude by the H\"older's inequality (recalling that $\sigma'=\frac{d}{d-k+1}$ and $k<1+\frac{d}{2}$)
\[
\|D\rho\|_{L^{\sigma'}(\Omega)}=\|D\rho\|_{L^{\frac{d}{d-k+1}}(\Omega)}\leq \|\rho^{\frac{\beta-1}{2}}D\rho\|_{L^2(\Omega)}\|\rho^{\frac{1-\beta}{2}}\|_{L^{\frac{2d}{d-2k+2}}(\Omega)}
\]
and using the previous estimates we conclude the assertion.\\
The proof in the case $\sigma'<\frac{d}{d-1}$ is based on maximal regularity arguments, and it can be obtained as follows. By \cite{S} (or argue as in \cite[Lemma 1]{CC} via \cite[Theorem 8.1]{Agmon}), we have
\[
\|D\rho\|_{L^{\sigma'}(\Omega)}\leq C\left(\|b\rho\|_{L^{\sigma'}(\Omega)}+\|\rho\|_{L^{\sigma'}(\Omega)}+\|\psi\|_{W^{-1,\sigma'}(\Omega)}\right),
\]
where $C$ depends on $\lambda,\Omega,\sigma$. We first handle the last term on the right-hand side, observing that
\[
\int_\Omega\psi\varphi\,dx\leq \|\psi\|_{L^1(\Omega)}\|\varphi\|_{L^\infty(\Omega)}\leq C\|\psi\|_{L^1(\Omega)}\|\varphi\|_{W^{1,\sigma}_0(\Omega)}\ ,\sigma>d\ ,
\]
so that by \cite[Section 3.13]{AF} we get $\|\psi\|_{W^{-1,\sigma'}(\Omega)}\leq C\|\psi\|_{L^1(\Omega)}$ for some positive constant $C>0$. Then, we have
\begin{multline*}
\begin{aligned}
\|D\rho\|_{L^{\sigma'}(\Omega)}&\leq C\left(\|b\rho\|_{L^{\sigma'}(\Omega)}+\|\rho\|_{L^{\sigma'}(\Omega)}+\|\psi\|_{L^1(\Omega)}\right)\\
&= C\left(\|b\rho^\frac1k\rho^\frac{1}{k'}\|_{L^{\sigma'}(\Omega)}+\|\rho\|_{L^{\sigma'}(\Omega)}+\|\psi\|_{L^1(\Omega)}\right)\\
&\leq C\left(\left(\int_\Omega |b|^k\rho\,dx\right)^\frac1k\|\rho\|_{L^z(\Omega)}^{\frac{1}{k'}}+\|\rho\|_{L^{\sigma'}(\Omega)}+\|\psi\|_{L^1(\Omega)}\right)\\
&\leq C\left(\frac{1}{2\delta}\int_\Omega |b|^k\rho\,dx+\frac{\delta}{2}\|\rho\|_{L^z(\Omega)}+\|\rho\|_{L^{\sigma'}(\Omega)}+\|\psi\|_{L^1(\Omega)}\right),
\end{aligned}
\end{multline*}
where we applied first the H\"older's inequality for an exponent $z>\sigma'$ satisfying
\begin{equation}\label{cond}
\frac{1}{\sigma'}=\frac1k+\frac{1}{zk'}\ ,
\end{equation}
and then the generalized Young's inequality. By the interpolation inequalities we have
\[
\|\rho\|_{L^{\sigma'}(\Omega)}\leq \|\rho\|_{L^1(\Omega)}^{1-\theta}\|\rho\|_{L^z(\Omega)}^\theta\ ,\theta\in(0,1)\ ,\frac{1}{\sigma'}=1-\theta+\frac{\theta}{z}\ .
\]
Therefore, exploiting the fact that $\|\rho\|_{L^1(\Omega)}\leq \frac{\|\psi\|_{L^1(\Omega)}}{\lambda}$ (see \cite{BocJDE} or \eqref{rhoL1}) and applying once more the generalized Young's inequality to the above term, we get
\[
\|D\rho\|_{L^{\sigma'}(\Omega)}\leq \tilde C\left(\frac{1}{\delta}\int_\Omega |b|^k\rho\,dx+\delta\|\rho\|_{L^z(\Omega)}+\|\psi\|_{L^1(\Omega)}\right)\ ,
\]
where $\tilde C$ now depends also on $\lambda$. The conditions $k=1+\frac{d}{\sigma}$ and \eqref{cond} lead to 
\[
\frac{1}{z}=\frac{1}{\sigma'}-\frac1d\ ,
\]
so that the Sobolev embedding applies to obtain
\[
\|\rho\|_{L^z(\Omega)}\leq C_1\|D\rho\|_{L^{\sigma'}(\Omega)}\ .
\]
This implies, by choosing $\delta=\frac{1}{2\tilde C C_1}$, a bound on $\rho\in L^z(\Omega)$, and hence the assertion. 
\end{proof}
\begin{cor}\label{corfpreg}
Let $\rho$ be the nonnegative weak solution to \eqref{fp}. There exists a constant $C>0$ depending on $d,q,|\Omega|$ and not on $\lambda$ such that if $\frac{2d}{d+2}<q<\frac{d}{2}$ we have
\[
\|\rho\|_{L^{q'}(\Omega)}\leq C\left(\int_\Omega |b|^\frac{d}{q}\rho\,dx+\|\psi\|_{L^{p'}(\Omega)} \right)
\]
with $p=\frac{dq}{d-2q}$. If $q=\frac{d}{2}$ or $q>\frac{d}{2}$ we have the same estimate for any finite $p'<\infty$ and $p'=1$ respectively, but the constant of the estimate depends also on $\lambda$. 
\end{cor}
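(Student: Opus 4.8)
The plan is to obtain Corollary~\ref{corfpreg} as a direct specialization of Proposition~\ref{regfp1}, choosing its free parameter $k$ so as to produce an $L^{q'}$ bound on $\rho$. Concretely, set $k:=d/q$, so that the hypothesis $|b|\in L^{d/q}(\Omega;\rho\,dx)$ is precisely $|b|\in L^{k}(\Omega;\rho\,dx)$, and define $\sigma$ through the relation $k=1+d/\sigma$ appearing in Proposition~\ref{regfp1}; this yields $\sigma=\frac{dq}{d-q}$ and $\sigma'=\frac{dq}{dq-d+q}$. The stated ranges of $q$ force $q>1$ (recall $d>2$), whence $1<\sigma'<d$ and Proposition~\ref{regfp1} applies; moreover an elementary computation gives
\[
\frac{d}{d-k}=\frac{d}{d-d/q}=\frac{q}{q-1}=q',
\]
so the conclusion $\rho\in L^{d/(d-k)}(\Omega)$ of Proposition~\ref{regfp1} is exactly the claimed bound on $\|\rho\|_{L^{q'}(\Omega)}$.

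It then remains to match the three regimes. For $\frac{2d}{d+2}<q<\frac d2$: the inequality $q<d/2$ is equivalent to $k>2$ and also to $\sigma'>\frac{d}{d-1}$, while $q>\frac{2d}{d+2}$ is equivalent to $k<1+\frac d2$; hence we are in the first case of Proposition~\ref{regfp1}, with $\psi\in L^{p'}$ for $p=\frac{d}{k-2}=\frac{dq}{d-2q}$, and with a constant depending only on $d,\sigma',|\Omega|$, i.e.\ only on $d,q,|\Omega|$ — crucially \emph{not} on $\lambda$, since in this range the proof of Proposition~\ref{regfp1} is the purely variational one, which never uses the zero-th order term. For $q=\frac d2$ one has $k=2$ and $\sigma'=\frac{d}{d-1}$, so the third case of Proposition~\ref{regfp1} applies, giving the estimate for every finite $p'$ but now with a $\lambda$-dependent constant; and for $q>\frac d2$ (in the relevant regime $q<d$) one has $k<2$ and $1<\sigma'<\frac{d}{d-1}$, so the second case applies with $p'=1$ and, again, a $\lambda$-dependent constant. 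In each case the left-hand side is $\|\rho\|_{L^{d/(d-k)}(\Omega)}=\|\rho\|_{L^{q'}(\Omega)}$, and the harmless additive $+1$ of Proposition~\ref{regfp1} may be dropped: inspecting its proof, the resulting bound is homogeneous of degree one in the pair $\bigl(\int_\Omega|b|^k\rho\,dx,\ \|\psi\|_{L^{p'}(\Omega)}\bigr)$.

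Since the corollary is essentially a change of variables inside Proposition~\ref{regfp1}, there is no genuine analytic obstacle; the only point demanding care is the exponent bookkeeping, and in particular confirming that the threshold separating the $\lambda$-free estimate from the $\lambda$-dependent one is exactly $q=d/2$. This amounts to the equivalence $q<d/2\iff\sigma'>\frac{d}{d-1}$, which I would read off directly from $\sigma'=\frac{dq}{dq-d+q}$, together with the fact that Proposition~\ref{regfp1} switches precisely at $\sigma'=\frac{d}{d-1}$ from its variational proof (which ignores $\lambda$) to its maximal-regularity proof (which invokes $\lambda$).
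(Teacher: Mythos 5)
Your proposal is correct and follows essentially the same route as the paper: set $k=d/q$ in Proposition~\ref{regfp1} and match the three regimes by checking the equivalences $q\lessgtr d/2 \iff k\gtrless 2 \iff \sigma'\gtrless \frac{d}{d-1}$. The only cosmetic difference is that you read the $L^{q'}$ bound off the term $\|\rho\|_{L^{d/(d-k)}}$ in the conclusion of Proposition~\ref{regfp1}, using $d/(d-k)=q'$, whereas the paper invokes the Sobolev embedding $W_0^{1,\sigma'}(\Omega)\hookrightarrow L^{q'}(\Omega)$ applied to the gradient term; these are of course the same computation, since $(\sigma')^*=d/(d-k)$ when $k=1+d/\sigma$. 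Your remark that the harmless ``$+1$'' may be dropped for the $\rho$-bound (because that portion of the estimate in Proposition~\ref{regfp1} is degree-one homogeneous), and that the third regime tacitly requires $q<d$ so that $\sigma'>1$, are both accurate observations that the paper leaves implicit.
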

\begin{proof}
The proof follows from Proposition \ref{regfp1} applied with $k=\frac{d}{q}$ and the embedding of $W_0^{1,\sigma'}(\Omega)$ onto $L^{q'}(\Omega)$.
\end{proof}

\subsubsection{Local Sobolev regularity}
In this section we prove a local counterpart of the regularity results of the previous section for weak solutions to
\begin{equation}\label{adjprop}
-\Delta \rho+\lambda\rho+\mathrm{div}(b(x)\rho)=\psi(x)\text{ in }\Omega\ ,
\end{equation}
focusing only on the case $b\in L^k(\rho)$ for  $k=1+\frac{d}{\sigma}$, $\frac{d}{d-1}<\sigma'<d$. This gives an alternative proof of \cite[Theorem 1-(ii)]{BKR} without using elliptic regularity theory, and provides an explicit estimate on the size of the norm.
\begin{prop}\label{regadjloc}
Let $\rho\in W^{1,2}_{\mathrm{loc}}(\Omega)$ be a weak solution to \eqref{adjprop} and let $B_R=\{x\in\R^d:|x|<R\}$. Let $\frac{d}{d-1}<\sigma'<d$, $\psi\in L^{p'}(B_R)$, $p=\frac{d}{k-2}$, with $|b|\in L^{k}_{loc}(B_R;\rho\,dx)$ where $k=1+\frac{d}{\sigma}$ satisfies $2<k<1+\frac{d}{2}$. Then $\rho\in W^{1,\sigma'}_{\mathrm{loc}}(\Omega)$, and every weak solution to \eqref{adjprop}  satisfies the interior estimate
\[
\|\rho\|_{L^{\frac{d}{d-k}}(B_{\frac{R}{2}})}+\||D\rho|\|_{L^{\sigma'}(B_{\frac{R}{2}})}\leq C\left(\||b|\|_{L^k_{\mathrm{loc}}(B_R;\rho\,dx)} +\|\psi\|_{L^{p'}(B_R)}+1\right)
\]
where $C$ depends in particular on $d,\sigma',k,R,\|\rho\|_{L^1(B_R)}$.
\end{prop}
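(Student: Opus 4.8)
The plan is to localize, by inserting cut-off functions, the variational argument used for the case $\sigma'>\frac{d}{d-1}$ of Proposition~\ref{regfp1}. Fix radii $\frac R2\le t<s\le\frac{3R}{4}$ and a cut-off $\xi\in C_c^\infty(B_s)$ with $\xi\equiv 1$ on $B_t$, $0\le\xi\le1$ and $|D\xi|\le\frac{c}{s-t}$, and set $\beta=\frac{k-2}{d-k}$; observe that $\beta\in(0,1)$ precisely because $2<k<1+\frac d2$. Testing the weak formulation of \eqref{adjprop} against $\varphi=\big((\delta+\rho)^\beta-\delta^\beta\big)\xi^2\in W^{1,2}(\Omega)$ with compact support (admissible since $\rho\in W^{1,2}_{\mathrm{loc}}(\Omega)$), expanding $D\varphi$ and letting $\delta\to0^+$, one is led to the weighted energy inequality
\[
\int_\Omega|D\rho|^2\rho^{\beta-1}\xi^2\,dx+\lambda\int_\Omega\rho^{\beta+1}\xi^2\,dx\le C\int_\Omega\big(|b|\,\rho^\beta|D\rho|\,\xi^2+|b|\,\rho^{\beta+1}\xi|D\xi|+\rho^\beta|D\rho|\,\xi|D\xi|+\rho^\beta\psi\,\xi^2\big)\,dx.
\]
Using the weighted Young inequality, the first and third terms on the right are absorbed (up to a small fraction) into $\int_\Omega|D\rho|^2\rho^{\beta-1}\xi^2\,dx$, at the price of $\int_\Omega|b|^2\rho^{\beta+1}\xi^2\,dx$ and $\int_\Omega\rho^{\beta+1}|D\xi|^2\,dx$; dropping the non-negative $\lambda$-term, this reduces the inequality to
\[
\int_\Omega|D\rho|^2\rho^{\beta-1}\xi^2\,dx\le C\Big(\int_{B_s}|b|^2\rho^{\beta+1}\xi^2\,dx+\int_{B_s}\rho^{\beta+1}|D\xi|^2\,dx+\int_{B_s}\rho^\beta\psi\,\xi^2\,dx\Big).
\]

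Next I would bound the left-hand side from below. Writing $|D\rho|^2\rho^{\beta-1}\xi^2=c_\beta|D(\rho^{(\beta+1)/2})|^2\xi^2$, using $|D(\rho^{(\beta+1)/2}\xi)|^2\le 2|D(\rho^{(\beta+1)/2})|^2\xi^2+2\rho^{\beta+1}|D\xi|^2$ and the Sobolev inequality for $\rho^{(\beta+1)/2}\xi\in W^{1,2}_0(B_s)$, together with the chain $\frac{d}{d-k}=(\beta+1)\frac{d}{d-2}=\beta\frac{k}{k-2}+1=\beta p$ (as in the proof of Proposition~\ref{regfp1}), and setting $\Phi(r):=\big(\int_{B_r}\rho^{d/(d-k)}\,dx\big)^{(d-2)/d}$, one gets
\[
\Phi(t)\le C\Big(\int_{B_s}|b|^2\rho^{\beta+1}\xi^2\,dx+\int_{B_s}\rho^{\beta+1}|D\xi|^2\,dx+\int_{B_s}\rho^\beta\psi\,\xi^2\,dx\Big).
\]
Each term on the right is handled as in Proposition~\ref{regfp1}: by Hölder with exponents $(\frac k2,\frac{k}{k-2})$ and the above chain, $\int_{B_s}|b|^2\rho^{\beta+1}\xi^2\,dx\le\big(\int_{B_s}|b|^k\rho\,dx\big)^{2/k}\Phi(s)^\mu$ with $\mu=\frac{d(k-2)}{k(d-2)}<1$ (here $k<d$ enters); by Hölder with $(p,p')$, $\int_{B_s}\rho^\beta\psi\,\xi^2\,dx\le\|\psi\|_{L^{p'}(B_s)}\Phi(s)^{\beta/(\beta+1)}$; and by the interpolation inequality $\|\rho\|_{L^{\beta+1}}\le\|\rho\|_{L^1}^{1-a}\|\rho\|_{L^{d/(d-k)}}^a$ (valid since $1<\beta+1<\frac{d}{d-k}$), $\int_{B_s}\rho^{\beta+1}|D\xi|^2\,dx\le\frac{C}{(s-t)^2}\|\rho\|_{L^1(B_s)}^{(1-a)(\beta+1)}\Phi(s)^a$. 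In each case the power of $\Phi(s)$ is strictly below $1$, so the weighted Young inequality converts these into $\varepsilon\,\Phi(s)$ plus data-dependent remainders; choosing $\varepsilon$ small enough yields
\[
\Phi(t)\le\tfrac12\,\Phi(s)+\frac{A}{(s-t)^\alpha}+B,\qquad \tfrac R2\le t<s\le\tfrac{3R}{4},
\]
for some $\alpha=\frac{2}{1-a}>0$, with $A$ depending on $\|\rho\|_{L^1(B_R)}$ and $B$ on $\int_{B_{3R/4}}|b|^k\rho\,dx$ (finite since $B_{3R/4}\Subset B_R$, by the local integrability hypothesis) and $\|\psi\|_{L^{p'}(B_R)}$.

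To close the argument I would invoke the standard iteration lemma: if $\Phi\ge0$ is bounded on $[r_0,r_1]$ and $\Phi(t)\le\theta\Phi(s)+A(s-t)^{-\alpha}+B$ for all $r_0\le t<s\le r_1$ with $\theta\in[0,1)$, then $\Phi(r_0)\le c(\theta,\alpha)\big(A(r_1-r_0)^{-\alpha}+B\big)$; I apply it with $r_0=\frac R2$, $r_1=\frac{3R}{4}$. The required boundedness of $\Phi$ on $[\frac R2,\frac{3R}{4}]$ holds because $\overline{B_{3R/4}}\Subset B_R$ and, since $d>2$, $W^{1,2}_{\mathrm{loc}}(\Omega)\hookrightarrow L^{2d/(d-2)}_{\mathrm{loc}}(\Omega)$ gives $\rho\in L^{2d/(d-2)}_{\mathrm{loc}}\subset L^{d/(d-k)}_{\mathrm{loc}}$, the inclusion being valid exactly because $k\le 1+\frac d2$. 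This bounds $\Phi(\frac R2)$, i.e. $\|\rho\|_{L^{d/(d-k)}(B_{R/2})}$, by the asserted quantity. For the gradient, with $\sigma'=\frac{d}{d-k+1}$ and Hölder,
\[
\|D\rho\|_{L^{\sigma'}(B_{R/2})}\le\big\|\rho^{(\beta-1)/2}D\rho\big\|_{L^2(B_{R/2})}\,\big\|\rho^{(1-\beta)/2}\big\|_{L^{2d/(d-2k+2)}(B_{R/2})},
\]
where the first factor is controlled by the energy inequality above with $\xi\equiv1$ on $B_{R/2}$ (its right-hand side now being under control) and the second by $\|\rho\|_{L^{d/(d-k)}(B_{R/2})}$; the exponent $\frac{2d}{d-2k+2}$ is positive and this step goes through, once more, only because $k<1+\frac d2$. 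Combining gives the interior estimate and, in particular, $\rho\in W^{1,\sigma'}_{\mathrm{loc}}(\Omega)$.

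The main obstacle, compared with the global estimate, is the term $\int\rho^{\beta+1}|D\xi|^2$ produced by the cut-off: it cannot be absorbed directly into the left-hand side, which forces the passage through a family of shrinking concentric balls and the iteration lemma, whose applicability in turn rests on the a priori local integrability $\rho\in L^{d/(d-k)}_{\mathrm{loc}}$ — exactly where $d>2$ and $k<1+\frac d2$ re-enter. Apart from that, the proof is essentially book-keeping: one must track the exponents to confirm that $2<k<1+\frac d2$ (equivalently $\frac{d}{d-1}<\sigma'<d$) is precisely what makes $\beta\in(0,1)$, $d-2k+2>0$ and all the Hölder/Young pairings above legitimate.
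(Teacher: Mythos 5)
Your proof is correct, but it takes a genuinely different route from the paper's at the one point where the local estimate differs from the global one of Proposition~\ref{regfp1}, namely the treatment of the cutoff-generated term $\int\rho^{\beta+1}|D\zeta|^2$. The paper avoids any iteration over scales: it builds a single cutoff $\zeta$ on $B_R$ satisfying the refined bound $\sup_x|D\zeta|\,\zeta^{-\eta}\leq C_\zeta$ with $\eta=\frac{\beta+1-2/k}{\beta+1}\in(0,1)$ (the BKRS-type cutoff), then writes $\rho^{\beta+1}|D\zeta|^2\lesssim\rho^{\beta+1-2/k}\zeta^{2\eta}\rho^{2/k}$ and applies H\"older with exponents $\xi=\frac{d}{(d-2)\eta}$, $\xi'$, tuned so that $2\xi'/k=1$. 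This splits the term into a small fraction of the Sobolev quantity $\big(\int\rho^{(\beta+1)\frac{d}{d-2}}\zeta^{\frac{2d}{d-2}}\big)^{1-2/d}$ (absorbed on the left) plus a constant depending only on $\|\rho\|_{L^1(B_R)}$, all at a single fixed pair of radii $(R/2,R)$. You instead take a generic cutoff with $|D\xi|\lesssim(s-t)^{-1}$, estimate the same term by interpolating $L^{\beta+1}$ between $L^1$ and $L^{d/(d-k)}$ with a sub-unit power of $\Phi(s)$, and close via the standard Giaquinta iteration lemma over concentric balls $B_t\subset B_s$, $\tfrac R2\le t<s\le\tfrac{3R}4$. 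Both are legitimate. Your route is the more routine Caccioppoli-plus-iteration argument and avoids constructing the tailored cutoff; the paper's cutoff trick avoids the iteration lemma and, in particular, does not explicitly invoke the a priori local $L^{d/(d-k)}$-integrability of $\rho$, which you correctly supply (and must supply, for the iteration lemma's hypothesis that $\Phi$ be finite) from $W^{1,2}_{\mathrm{loc}}\hookrightarrow L^{2d/(d-2)}_{\mathrm{loc}}\subset L^{d/(d-k)}_{\mathrm{loc}}$, the last inclusion holding precisely because $k\le 1+\tfrac d2$. In both proofs the constant ends up depending on $\|\rho\|_{L^1(B_R)}$, as stated.
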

\begin{proof}
Let $\zeta\in C_0^\infty(B_R)$ be such that $0\leq \zeta\leq 1$ satisfying $\zeta>0$ in $B:=B_R$, $\zeta=1$ on the twice smaller ball $B_{R/2}$ and assume
\begin{equation}\label{cut}
\sup_x |D\zeta|\zeta^{-\eta}\leq C_\zeta
\end{equation}
for $\eta= \frac{\beta+1-2/k}{\beta+1}\in(0,1)$, $\beta=\frac{k-2}{d-k}$ and some positive constant $C_\zeta$. Such conditions are verified by $\zeta(x)=\psi(|x|/R)$, $\psi\in C_0^\infty(\R)$ with $\psi$ such that $0\leq\psi\leq 1$, $\psi(y)>0$ for $|y|<1$, $\psi(y)=0$ when $|y|\geq1$ and $\psi(y)=1$ on $|y|\leq\frac12$, with $\psi(y)=\mathrm{exp}((y^2-1)^{-1})$ near the end-points $-1$ and $1$, cf \cite[Theorem 1.7.4]{BKRS}. \\
We test the equation against $\varphi=\rho^\beta\zeta^2$, $\beta=\frac{k-2}{d-k}$ and set $\alpha=\frac{2(d-k)}{d-2}$. We have
\[
\int_B D\rho\cdot D(\zeta^2\rho^\beta)\,dx-\int_B \rho b\cdot D(\zeta^2\rho^\beta)\,dx+\lambda\int_B \rho^{\beta+1}\zeta^2\,dx=\int_B \zeta^2\rho^\beta \psi\,dx\ .
\]
We thus write
\begin{multline*}
\int_B |D\rho|^2\rho^{\beta-1}\zeta^2\,dx+\frac{\lambda}{\beta}\int_B \rho^{\beta+1}\zeta^2\,dx\leq \frac{2}{\beta}\int_B |D\rho||D\zeta|\rho^\beta\zeta\,dx\\
+\int_B \zeta^2|b|\rho^\beta|D\rho|\,dx+\frac2\beta \int_B |D\zeta|\zeta\rho^{\beta+1}|b|\,dx+\frac1\beta\int_B \zeta^2\rho^\beta\psi\,dx\ .
\end{multline*}
We then use Young's inequality to get
\begin{multline}\label{mainineq}
\int_B |D\rho|^2\rho^{\beta-1}\zeta^2\,dx+\frac{\lambda}{\beta}\int_B \rho^{\beta+1}\zeta^2\,dx\leq \frac{2}{\beta}\int_B |D\rho||D\zeta|\rho^\beta\zeta\,dx\\
+\frac18\int_B|D\rho|^2\rho^{\beta-1}\zeta^2\,dx+2\int_B\rho^{\beta+1}|b|^2\zeta^2\,dx  +\frac2\beta \int_B |D\zeta|\zeta\rho^{\beta+1}|b|\,dx+\frac1\beta\int_B \zeta^2\rho^\beta\psi\,dx\\
=\mathrm{(I)+(II)+(III)+(IV)+(V)}\ .
\end{multline}
Note that $\mathrm{(II)}$ can be absorbed on the left-hand side.
We observe that by the Sobolev's inequality \cite[Theorem 7.10]{GT}
\begin{multline}\label{sob}
\frac12\int_B |D\rho|^2\rho^{\beta-1}\zeta^2\,dx=c_\beta\int_B |D\rho^{\frac{\beta+1}{2}}|^2\zeta^2\,dx=c_\beta\int_B |D(\rho^{\frac{\beta+1}{2}}\zeta)|^2\,dx-c_\beta\int_B \rho^{\beta+1}|D\zeta|^2\,dx\\
\geq c_{d,\beta}\left(\int_B \rho^{(\beta+1)\frac{d}{d-2}}\zeta^{\frac{2d}{d-2}}\,dx\right)^{1-\frac2d}-c_\beta\int_B \rho^{\beta+1}|D\zeta|^2\,dx\ ,
\end{multline}
where $c_\beta=\frac{2}{(\beta+1)^2}$. We now start estimating the terms in \eqref{mainineq} (and the one on the right-hand side of the above inequality, which has a negative sign). Applying the H\"older's inequality with exponents $\left(\frac{k}{2},\frac{k}{k-2}\right)$ and the weighted Young's inequality with the pair $\left(\frac{(d-2)k}{2(d-k)},\frac{(d-2)k}{d(k-2)}\right)$, we get
\begin{multline*}
\mathrm{(III)}=\int_B\zeta^2\rho^{\beta+1}|b|^2\,dx=\int_B |b|^2\rho^{\frac2k}\zeta^{\frac{2\alpha}{k}}\rho^{\beta+1-\frac{2}{k}}\zeta^{\frac{2d(k-2)}{k(d-2)}}\,dx\\
\leq  \left(\int_B\zeta^{\frac{2(d-k)}{d-2}} |b|^k\rho\right)^{\frac2k}\left(\int_B \rho^{\beta\frac{k}{k-2}+1}\zeta^{\frac{2d}{d-2}}\,dx\right)^{1-\frac2k}\\
\leq C_1(d,\beta,k)\left(\int_B\zeta^{\frac{2(d-k)}{d-2}} |b|^k\rho\,dx\right)^{\frac{d-2}{d-k}}+\frac{c_{d,\beta}}{24}\left(\int_B \rho^{\beta\frac{k}{k-2}+1}\zeta^{\frac{2d}{d-2}}\,dx\right)^{1-\frac2d}.
\end{multline*}
Owing to the above estimate, we then obtain via the Young's inequality
\begin{multline*}
\mathrm{(IV)}=\frac2\beta \int_B |D\zeta|\zeta\rho^{\beta+1}|b|\,dx\leq \frac{1}{\beta}\int_B \rho^{\beta+1}|D\zeta|^2\,dx+\frac{1}{\beta}\int_B |b|^2\zeta^2\rho^{\beta+1}\,dx\\
\leq \frac{1}{\beta}\int_B \rho^{\beta+1}|D\zeta|^2\,dx+\frac{1}{\beta}\left(\int_B\zeta^{\frac{2(d-k)}{d-2}} |b|^k\rho\right)^{\frac2k}\left(\int_B \rho^{\beta\frac{k}{k-2}+1}\zeta^{\frac{2d}{d-2}}\,dx\right)^{1-\frac2k}\\
\leq \frac{1}{\beta}\int_B \rho^{\beta+1}|D\zeta|^2\,dx+C_2(d,k,\beta)\left(\int_B\zeta^{\frac{2(d-k)}{d-2}} |b|^k\rho\,dx\right)^{\frac{d-2}{d-k}}+\frac{c_{d,\beta}}{24}\left(\int_B \rho^{\beta\frac{k}{k-2}+1}\zeta^{\frac{2d}{d-2}}\,dx\right)^{1-\frac2d}\ .
\end{multline*}
We now estimate $\int_B \rho^{\beta+1}|D\zeta|^2\,dx$ (which appeared in \eqref{sob} and from the above inequality) using \eqref{cut} for $\eta:=\frac{\beta+1-2/k}{\beta+1}\in(0,1)$, and H\"older's inequality applied with the exponents $\xi=\frac{d}{(d-2)\eta}$ and $\xi'$. In particular, using the definition of $\beta$ and $\eta$ one first checks that $2\xi'/k=1$. Indeed, 
\[
\frac{1}{\xi}=\frac{d-2}{d}\eta
\]
hence
\[
\frac{1}{\xi'}=1-\frac{d-2}{d}\eta.
\]
Therefore
\[
\frac{k}{2\xi'}=\frac{k}{2}-\frac{k}{2}\frac{d-2}{d}\frac{\beta+1-\frac2k}{\beta+1}=\frac{k}{2}-\frac{k}{2}\frac{d-2}{d}\frac{d(k-2)}{d-2}\frac{d-k}{k(d-k)}=\frac{k}{2}-\frac{k-2}{2}=1.
\]
We use once more the H\"older's inequality first and then the Young's inequality, together with \eqref{cut}, to conclude 
\begin{multline*}
\begin{aligned}
\int_B \rho^{\beta+1}|D\zeta|^2\,dx&\leq \frac{C_\zeta}{\beta}\int_B\rho^{\beta+1-\frac2k}\zeta^{2\eta}\rho^{\frac2k}\,dx\\
&\leq \frac{C_\zeta}{\beta}\left(\int_B\rho^{(\beta+1)\frac{d}{d-2}}\zeta^{\frac{2d}{d-2}}\,dx\right)^{\left(1-\frac2d\right)\eta}\left(\int_B\rho^{\frac{2\xi'}{k}}\,dx\right)^\frac{1}{\xi'}\\
&\leq \frac{c_{d,\beta}}{24} \left(\int_B\rho^{(\beta+1)\frac{d}{d-2}}\zeta^{\frac{2d}{d-2}}\,dx\right)^{1-\frac2d}+C(d,\beta,\zeta, \|\rho\|_{L^1(B_1)})\ ,
\end{aligned}
\end{multline*}
where we also used that $\frac{k}{2\xi'}=1$.
Moreover, using that $\frac{2d}{d-2}< \frac{2d}{k-2}$ (note that $k<1+\frac{d}{2}$, hence $k<d$ for $d>2$), we write, applying the H\"older and Young's inequalities
\begin{multline*}
\mathrm{(V)}=\frac1\beta\int_B \zeta^2\rho^\beta\psi\,dx\leq \frac1\beta \|\psi\|_{L^{p'}(B)}\left(\int_B \rho^{\beta p}\zeta^{\frac{2d}{k-2}}\,dx\right)^{\frac1p}\leq \frac1\beta \|\psi\|_{L^{p'}(B)}\left(\int_B \rho^{\beta p}\zeta^{\frac{2d}{d-2}}\,dx\right)^{\frac1p}\\
\leq \frac{c_{d,\beta}}{24}\left(\int_B \rho^{\beta p}\zeta^{\frac{2d}{d-2}}\,dx\right)^{1-\frac2d}+C(\beta,p,c_{d,\beta})\|\psi\|_{L^{p'}(B)}^{\frac{d-2}{d-k}}\ .
\end{multline*}
We now consider
\begin{multline*}
\mathrm{(I)}=\frac{2}{\beta}\int_B |D\rho||D\zeta|\rho^\beta\zeta\,dx\leq \frac18 \int_B |D\rho|^2\rho^{\beta-1}\zeta^2\,dx+C(\beta)\int_B \rho^{\beta+1}|D\zeta|^2\,dx\\
\leq \frac18 \int_B |D\rho|^2\rho^{\beta-1}\zeta^2\,dx+\frac{c_{d,\beta}}{24} \left(\int_B\rho^{(\beta+1)\frac{d}{d-2}}\zeta^{\frac{2d}{d-2}}\,dx\right)^{1-\frac2d}+C(d,\beta,\zeta, \|\rho\|_{L^1(B_1)})\ .
\end{multline*}

We plug all the estimates together in \eqref{mainineq} noting that \[
\frac{d}{d-k}=(\beta+1)\frac{d}{d-2}=\beta\frac{k}{k-2}+1=\beta p\ ,
\] to obtain
\begin{multline*}
\begin{aligned}
\frac{c_{d,\beta}}{4}\left(\int_B \rho^{(\beta+1)\frac{d}{d-2}}\zeta^{\frac{2d}{d-2}}\,dx\right)^{1-2/d}&+\frac14\int_B \rho^{\beta-1}|D\rho|^2\zeta^2\,dx\\
&\leq C_3\left[ \left(\int_B \zeta^{\frac{2(d-k)}{d-2}}|b|^k\rho\,dx\right)^{\frac{d-2}{d-k}}+\|\psi\|_{L^{q'}(B)}^{\frac{d-2}{d-k}}+1\right]\\
\\
&\leq C_4\left[\int_B \zeta^{\frac{2(d-k)}{d-2}}|b|^k\rho\,dx+\|\psi\|_{L^{p'}(B)}+1\right]^{\frac{d-2}{d-k}}\ ,
\end{aligned}
\end{multline*}
where $C_3,C_4$ depends on $d,\beta,C_\zeta,k$ together with $\|\rho\|_{L^1(B_R)}$. This in turn allows to conclude for some positive constant $C_5>0$
\[
\left(\int_B \rho^{\frac{d}{d-k}}\zeta^{\frac{2d}{d-2}}\,dx\right)^{1-2/d}
\leq C_5\left[\left(\int_B \zeta^{\frac{2(d-k)}{d-2}} |b|^k\rho\,dx\right)+\|\psi\|_{L^{p'}(B)}+1\right]^{\frac{d-2}{d-k}}\ .
\]
This in particular yields the estimate $\|\rho\zeta^{\frac{2(d-k)}{d-2}}\|_{L^{\frac{d}{d-k}}(B_R)}\geq \|\rho\|_{L^{\frac{d}{d-k}}(B_\frac{R}{2})}$. Finally, by the H\"older's inequality, using that $2<k<1+\frac{d}{2}$, we have
\begin{multline*}
\|\zeta^{\frac{2(d-k)}{d-2}}D\rho\|_{L^{\frac{d}{d-k+1}}(B)}\leq \|\zeta\rho^{\frac{\beta-1}{2}}D\rho\|_{L^2(B)}\|\rho^{\frac{1-\beta}{2}}\zeta^{\frac{d-2k+2}{d-2}}\|_{L^{\frac{2d}{d-2k+2}}(B)}\\
=\left(\int_B \rho^{\beta-1}\zeta^2|D\rho|^2\,dx\right)^\frac12\left(\int_{B}\rho^{\frac{d}{d-k}}\zeta^{\frac{2d}{d-2}}\,dx\right)^{\frac{d-2k+2}{2d}}\ ,
\end{multline*}
and we conclude the local gradient regularity using the previous estimates.
\end{proof}

\subsection{Integral estimates for viscous Hamilton-Jacobi equations}\label{estu}
In this section we focus on strong solutions in the standard sense of \cite[Chapter 9]{GT} of \eqref{hj} such that $u\in W^{2,q}(\Omega)\cap W^{1,q}_0(\Omega)$, $q>\frac{d}{\gamma'}$, dealing with the case $\gamma>\frac{d+2}{d}$. On one hand, we observe that under the restriction $q\geq\frac{d}{\gamma'}$ we have the embedding
\[
W^{2,q}\hookrightarrow W^{1,q\gamma}
\] 
and hence $b\sim |Du|^{\gamma-1}\in L^p$, $p\geq d$. Therefore, when the velocity field $b=-D_pH(Du)$, under the standing growth conditions on the Hamiltonian term $H$, the adjoint equation \eqref{fp} turns out to be well-posed by Proposition \ref{well}. The above restriction on $\gamma$ is imposed because we use the energy formulation of \eqref{hj} via the embedding
\[
W^{2,q}\hookrightarrow W^{1,2}\ ,
\]
which occurs when $q\geq \frac{2d}{d+2}$. Then, $\frac{d}{\gamma'}\geq \frac{2d}{d+2}$ precisely when $\gamma\geq \frac{d+2}{d}$, which is indeed the critical threshold guaranteeing the validity of the energy formulation of the problem, see also \cite{GMP14}. One expects to address the case below $\gamma= \frac{d+2}{d}$ using different techniques and notion of solutions, cf \cite{MercaldoCPAA,GMP14} and the references therein.\\
In this section, we denote by $L$ the Lagrangian of $H$, defined as its Legendre transform, i.e. $L(x,\nu)=\sup_{p\in\R^d}\{p\cdot \nu-H(x,p)\}$. Moreover, by the convexity property of $H$ we have
\[
H(x,p)=\sup_{\nu\in\R^d}\{p\cdot \nu-L(x,\nu)\}
\]
and
\[
H(x,p)=p\cdot \nu-L(x,\nu)\text{ if and only if }\nu=D_pH(x,p)\ .
\]
Under the standing assumptions we will henceforth use that for some $C_L>0$ depending on $C_H$
\begin{equation}\label{L}\tag{L}
C_L^{-1}|\nu|^{\gamma'}-C_L\leq L(x,\nu)\leq C_L|\nu|^{\gamma'}+C_L\ ,\gamma'=\frac{\gamma}{\gamma-1}\ .
\end{equation}
We will focus here on the case $\frac{d}{\gamma'}<q<\frac{d}{2}$, i.e. when $\gamma<2$, and deduce $L^p$ integral estimates in the next sections.
\subsubsection{The subcritical case $q>\frac{d}{\gamma'}$}
We start with the following bound on the positive part of $u$, $u^+=\max\{u,0\}$. It holds for any $q\geq \frac{2d}{d+2}$. Below, $u^-=(-u)^+$, while $T_k(s)=\max\{-s,\min\{s,k\}\}$ will denote the standard truncation operator at level $k>0$.  
\begin{prop}\label{upiu}
Assume that $H$ is nonnegative and let $u\in W^{2,q}(\Omega)\cap W^{1,q}_0(\Omega)$, $q\geq \frac{2d}{d+2}$, be a strong solution to the Dirichlet problem \eqref{hj}. There exists a positive constant $C_0$ such that 
\[
\|u^+\|_{L^p(\Omega)}\leq C_0\|f^+\|_{L^q(\Omega)}
\]
with $p=\frac{dq}{d-2q}$ if $q<\frac{d}{2}$, any finite $p$ when $q=\frac{d}{2}$, $p=\infty$ if $q>\frac{d}{2}$. Here, $C_0$ depends on $d,q, |\Omega|$ but not on $\lambda$.
\end{prop}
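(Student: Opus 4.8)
Only the sign condition $H\ge0$ is used here: it lets one discard the gradient term by a one-sided comparison with the \emph{linear} Dirichlet problem, after which the exponent $p$ and the dependence of $C_0$ on the data come from the classical Newtonian-potential estimate for that linear problem.

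Since $u$ is a strong solution and $H(x,Du)\ge0$ a.e., we have $-\Delta u+\lambda u=f-H(x,Du)\le f\le f^+$ a.e.\ in $\Omega$. Let $v\in W^{1,2}_0(\Omega)$ solve $-\Delta v+\lambda v=f^+$ in $\Omega$, $v=0$ on $\partial\Omega$ (this makes sense because $f^+\in L^q(\Omega)\hookrightarrow W^{-1,2}(\Omega)$, using $q>\frac{2d}{d+2}$). I claim $0\le u^+\le v$ a.e. Testing the equation for $v$ against $-v^-$ and using $\lambda>0$ gives $\int_\Omega|Dv^-|^2+\lambda\int_\Omega(v^-)^2\le0$, so $v\ge0$; testing the inequality $-\Delta w+\lambda w\le0$ for $w:=u-v$ against $w^+$ gives $\int_\Omega|Dw^+|^2+\lambda\int_\Omega(w^+)^2\le0$, so $u\le v$; hence $u^+\le\max\{v,0\}=v$. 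The admissibility of $v^-,w^+$ as $W^{1,2}_0(\Omega)$ test functions is exactly where $q>\frac{2d}{d+2}$ enters, through the embedding $W^{2,q}(\Omega)\hookrightarrow W^{1,2}(\Omega)$ (which gives $u,v\in W^{1,2}$ with vanishing trace), and this is why the proposition is stated in that range.

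There remains the estimate for $\|v\|_{L^p(\Omega)}$. Writing $v$ through its Green representation, $v(x)=\int_\Omega G^\Omega_\lambda(x,y)f^+(y)\,dy$ with $G^\Omega_\lambda\ge0$ the Green function of $-\Delta+\lambda$ on $\Omega$ with Dirichlet data, one has the pointwise domination $0\le G^\Omega_\lambda(x,y)\le c_d|x-y|^{2-d}$: the difference $c_d|{\cdot}-y|^{2-d}-G^\Omega_\lambda({\cdot},y)$ solves $(-\Delta+\lambda)(\cdot)=\lambda c_d|{\cdot}-y|^{2-d}\ge0$ in $\Omega$ (the Dirac masses cancel), is $\ge0$ on $\partial\Omega$ and bounded near $y$, hence $\ge0$ by the maximum principle. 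Thus $0\le u^+(x)\le v(x)\le c_d\int_\Omega|x-y|^{2-d}f^+(y)\,dy$. For $q<\frac d2$ the Hardy--Littlewood--Sobolev inequality applied to the zero extension of $f^+$ yields $\|u^+\|_{L^p(\Omega)}\le C\|f^+\|_{L^q(\Omega)}$ with $\frac1p=\frac1q-\frac2d$, i.e.\ $p=\frac{dq}{d-2q}$, and a constant depending only on $d,q$ (in particular not on $\lambda$). For $q>\frac d2$, instead, $\sup_{x\in\Omega}\||x-\cdot|^{2-d}\|_{L^{q'}(\Omega)}<\infty$, so H\"older's inequality gives the $L^\infty$ bound; in the borderline case $q=\frac d2$ the potential estimate alone only yields $L^p$ for every finite $p$, and one needs the zeroth-order term to close the argument, which is why $C_0$ is allowed to depend on $\lambda$ once $q\ge\frac d2$. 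Equivalently, in the duality spirit of the paper, $u^+(x_0)\le\int_\Omega G^\Omega_\lambda(x_0,y)f^+(y)\,dy$ for a.e.\ $x_0$ can be obtained by testing \eqref{hj} against the nonnegative solution $\rho$ of the drift-free case of \eqref{fp} with data $\psi\ge0$, $\psi\in L^{p'}(\Omega)$, discarding $\int_\Omega H(x,Du)\rho\ge0$, and taking the supremum over $\|\psi\|_{L^{p'}}\le1$. The only genuinely delicate point is the comparison principle for $q$ close to $\frac{2d}{d+2}$, where $u\notin W^{1,\infty}$ and the Aleksandrov--Bakel'man--Pucci estimate is unavailable; the $W^{1,2}$-testing argument above is what handles it, the remaining steps being bookkeeping of the exponents and of the (non-)dependence on $\lambda$, made transparent by the Green-function domination.
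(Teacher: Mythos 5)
Your proof is correct, and it takes a genuinely different route from the paper's. The paper's proof is a direct duality argument: one sets $\psi_1=[T_k(u^+)]^{p-1}/\|u^+\|_{L^p}^{p-1}$, solves $-\Delta\mu+\lambda\mu=\psi_1$ for a nonnegative $\mu$, appeals to the Fokker--Planck regularity estimate of Corollary \ref{corfpreg} (with $b\equiv0$) to get $\|\mu\|_{L^{q'}}\leq C$ uniformly in $k$, uses Kato's inequality to see that $u^+$ is a subsolution of $-\Delta u^++\lambda u^+\leq[f-H]\mathbbm{1}_{\{u>0\}}$, and pairs the two equations, discarding $\int H\mathbbm{1}_{\{u>0\}}\mu\geq0$ and letting $k\to\infty$. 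Your argument instead compares $u$ with the solution $v$ of the linear problem $-\Delta v+\lambda v=f^+$ (a weak maximum principle in the $W^{1,2}_0$ energy class, where $q>\tfrac{2d}{d+2}$ is exactly what puts $\Delta w\in L^q\hookrightarrow(W^{1,2}_0)'$ and makes the testing with $w^+$, $v^-$ legitimate), and then estimates $v$ by the Newtonian-potential domination $0\leq G^\Omega_\lambda\leq c_d|x-y|^{2-d}$ followed by Hardy--Littlewood--Sobolev (resp.\ H\"older) for $q<\tfrac d2$ (resp.\ $q>\tfrac d2$). What the paper's route buys is uniformity of method: the same $\psi_1$--$\mu$ scheme, now with $b=-D_pH(x,Du)$, is re-used verbatim for $u^-$ in Proposition \ref{umeno} and feeds into the Fokker--Planck estimates that are the backbone of the whole paper. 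What your route buys is that it is more elementary and self-contained, and it actually yields a constant independent of $\lambda$ even for $q>\tfrac d2$, which is slightly stronger than the statement (the proposition only asserts that $C_0$ \emph{may} depend on $\lambda$ in that regime; in the paper's proof this dependence is an artifact of the $\|\rho\|_{L^1}\leq\|\psi\|_{L^1}/\lambda$ step inside Corollary \ref{corfpreg}).

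One small point worth tightening: in justifying $G^\Omega_\lambda(x,y)\leq c_d|x-y|^{2-d}$ you assert that $c_d|\cdot-y|^{2-d}-G^\Omega_\lambda(\cdot,y)$ is bounded near $y$, which is not immediate in high dimension (the Bessel-type correction to the leading singularity of the $(-\Delta+\lambda)$-kernel is $O(|x-y|^{4-d})$). The cleanest route is $G^\Omega_\lambda\leq G^\Omega_0\leq c_d|x-y|^{2-d}$: the second inequality is the standard maximum-principle comparison for the \emph{harmonic} correction and the first follows, e.g., from the heat-kernel representation $G^\Omega_\lambda=\int_0^\infty e^{-\lambda t}p^\Omega_t\,dt\leq\int_0^\infty p^\Omega_t\,dt=G^\Omega_0$, or by mollifying the Dirac data. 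This is a standard fact and does not affect the validity of your argument, especially since you already note the equivalent duality formulation (test $-\Delta u+\lambda u\leq f^+$ against the nonnegative solution of the drift-free \eqref{fp}), which sidesteps the Green-function estimate entirely and is essentially the paper's proof.
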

\begin{proof}
We first consider the case $q<\frac{d}{2}$. For $k>0$ consider the weak nonnegative solution $\mu=\mu_k$ to the Dirichlet problem
\begin{equation}\label{aux1}
\begin{cases}
-\Delta \mu+\lambda \mu= \psi_1(x)&\text{ in }\Omega\ ,\\
\mu=0&\text{ on }\partial\Omega\ .
\end{cases}
\end{equation}
where 
\[
\psi_1(x)=\frac{[T_k(u^+(x))]^{p-1}}{\|u^+\|_{L^{p}(\Omega)}^{p-1}}.
\]
Note that $\|\psi_1\|_{p'}\leq 1$. By Corollary \ref{corfpreg} with $b\equiv0$ we deduce 
\[
\|\mu\|_{L^{q'}(\Omega)}\leq C
\]
with $C$ independent of $k$. By Kato's inequality \cite{BrezisAMO}, $u^+$ is a weak subsolution to
\[
-\Delta u^++\lambda u^+\leq [f(x)-H(x,Du(x))]\mathbbm{1}_{\{u>0\}}\text{ in }\Omega\ .
\]
Then, using $\mu$ as a test function in the previous equation and $u^+$ as a test function in problem \eqref{aux1}, we deduce
\begin{multline*}
\int_\Omega \psi_1(x)u^+(x)\,dx\leq \int_{\Omega\cap\{u>0\}}f\mu\,dx-\int_\Omega H(x,Du)\mathbbm{1}_{\{u>0\}}\mu\,dx\\
\leq \|f^+\|_{L^q(\Omega)}\|\mu\|_{L^{q'}(\Omega)}\leq C\|f^+\|_{L^q(\Omega)}\ .
\end{multline*}
The estimate follows by duality using the definition of $\psi_1$, applying H\"older's inequality on the right-hand side of the above inequality and sending $k\to\infty$. We now briefly discuss the case $q>\frac{d}{2}$. First, one considers \eqref{aux1} with a right-hand side $\psi_1\geq0$ such that $\|\psi_1\|_{L^1(\Omega)}=1$. Using the same strategy, without need to use a truncation argument, using that $\mu$ is nonnegative and $H\geq0$, one obtains $u^+\in L^\infty(\Omega)$ using classical estimates for boundary-value problems, see \cite{BGae}. We refer to \cite[Proposition 3.7]{cg20} for further details, being the proof similar.
\end{proof}
We now consider the more delicate bound on the negative part $u^-$, which needs the extra integrability requirement $q>\frac{d}{\gamma'}$.
\begin{prop}\label{umeno}
Assume that \eqref{H} holds. Let $u\in W^{2,q}(\Omega)\cap W^{1,q}_0(\Omega)$ be a strong solution to \eqref{hj} with $q>d/\gamma'$. There exists a positive constant $C$ depending on $d,q,C_H,\gamma$ such that
\[
\|u^-\|_{L^p(\Omega)}\leq C\|f^-\|_{L^q(\Omega)}^{\frac{q\gamma'}{q\gamma'-d}}+CC_L\frac{|\Omega|^\frac1p}{\lambda}+C\|f^-\|_{L^q(\Omega)}
\]
where $p=\frac{dq}{d-2q}$ when $q<\frac{d}{2}$, any $p<\infty$ when $q=\frac{d}{2}$ and $p=\infty$ when $q>\frac{d}{2}$.
\end{prop}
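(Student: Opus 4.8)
The plan is to mimic the duality argument used in Proposition \ref{upiu}, but now testing against the solution of the full Fokker-Planck equation \eqref{fp} with a drift built from the optimal velocity field, and then controlling the extra Hamiltonian term that appears. First I would fix $k>0$, and for the auxiliary datum I would take $\psi(x) = [T_k(u^-(x))]^{p-1}/\|u^-\|_{L^p(\Omega)}^{p-1}$, so that $\|\psi\|_{L^{p'}(\Omega)}\le 1$ (with the obvious modification $\|\psi\|_{L^1(\Omega)}=1$ in the cases $q\ge d/2$). Let $\rho=\rho_k\ge 0$ be the weak solution of \eqref{fp} with $b=-D_pH(x,Du)$; this is well-posed by Proposition \ref{well} since, as noted at the start of Section \ref{estu}, $q>d/\gamma'$ forces $W^{2,q}\hookrightarrow W^{1,q\gamma}$, hence $|b|\sim|Du|^{\gamma-1}\in L^p$ with $p\ge d$.

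The core step is the integral identity obtained by using $u$ (or rather $-u^-$, made rigorous by Kato's inequality as in Proposition \ref{upiu}) as a test function in \eqref{fp} and $\rho$ as a test function in \eqref{hj}. Pairing the two weak formulations, the second-order and zero-th order terms match up and one is left, schematically, with
\[
\int_\Omega \psi\,u^-\,dx \;\le\; \int_\Omega f^-\rho\,dx + \int_\Omega \bigl(D_pH(x,Du)\cdot Du - H(x,Du)\bigr)\rho\,dx.
\]
Now the convexity of $H$ enters decisively: by the Legendre duality recalled before \eqref{L}, $D_pH(x,Du)\cdot Du - H(x,Du) = L(x,D_pH(x,Du))$, and by the lower-right bound in \eqref{L} this equals $L(x,-b)\le C_L|b|^{\gamma'}+C_L$. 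Thus the second integral is bounded by $C_L\int_\Omega|b|^{\gamma'}\rho\,dx + C_L\int_\Omega\rho\,dx$. The last term is controlled by $C_L|\Omega|^{1/p}\lambda^{-1}\|\psi\|_{L^{p'}}\le C_L|\Omega|^{1/p}/\lambda$ via the $L^1$-bound \eqref{rhoL1} (or rather its consequence $\|\rho\|_{L^1}\le\|\psi\|_{L^1}/\lambda$ together with Hölder), which already explains the appearance of the term $CC_L|\Omega|^{1/p}/\lambda$ in the statement.

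The remaining obstacle — and this is the heart of the proof — is to absorb the term $C_L\int_\Omega|b|^{\gamma'}\rho\,dx$. Here I would invoke Corollary \ref{corfpreg} with $k_{\mathrm{FP}}=d/q=\gamma'$ (note $d/\gamma'<q<d/2$ is exactly $2<\gamma'<d/(d-\cdots)$ in the relevant range, so $2<d/q<1+d/2$ holds and the hypotheses of Corollary \ref{corfpreg} are met), which gives $\|\rho\|_{L^{q'}(\Omega)}\le C\bigl(\int_\Omega|b|^{d/q}\rho\,dx+\|\psi\|_{L^{p'}}\bigr)\le C\bigl(\int_\Omega|b|^{\gamma'}\rho\,dx+1\bigr)$. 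One then splits $\int_\Omega f^-\rho\,dx\le\|f^-\|_{L^q}\|\rho\|_{L^{q'}}\le C\|f^-\|_{L^q}\bigl(\int_\Omega|b|^{\gamma'}\rho\,dx+1\bigr)$, so the full right-hand side is estimated by a multiple of $\|f^-\|_{L^q}\cdot I + \|f^-\|_{L^q} + C_L I + C_L|\Omega|^{1/p}/\lambda$ where $I:=\int_\Omega|b|^{\gamma'}\rho\,dx$. The delicate point is that this same quantity $I$ must itself be estimated in terms of $u^-$: using $u$ as a test function in \eqref{fp} and exploiting that $H(x,Du)\ge C_H^{-1}|Du|^\gamma - C_H$ together with $|b|^{\gamma'}\lesssim|Du|^\gamma$, one obtains a bound of the form $I \lesssim \int_\Omega f^-\rho\,dx + \int_\Omega\psi\,u^-\,dx + \text{l.o.t.}$, i.e. $I$ is controlled by $\|u^-\|_{L^p}$ (via $\|\psi\|_{L^{p'}}\le 1$) plus $\|f^-\|_{L^q}(I+1)$. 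Feeding this back, choosing the implicit smallness appropriately (which is where the exponent $q\gamma'/(q\gamma'-d)>1$ is produced, through a Young-type rearrangement of the nonlinear inequality $X\le A X^{\theta}+B$ with $\theta=d/(q\gamma')<1$ for $X=\|u^-\|_{L^p}$), and finally letting $k\to\infty$ by monotone convergence, yields the asserted estimate. The case $q=d/2$ is handled identically using any finite $p'<\infty$ in Corollary \ref{corfpreg}, and the case $q>d/2$ with $p'=1$, giving $u^-\in L^\infty$; I would refer to \cite[Proposition 3.8]{cg20} for the entirely analogous details.
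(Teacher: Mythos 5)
Your high-level plan (duality with a Fokker--Planck adjoint, pass to the Lagrangian via the Legendre transform of $H$, control $\|\rho\|_{L^{q'}}$ via Corollary \ref{corfpreg}, absorb using Young's inequality in the range $q>d/\gamma'$, truncate and pass to the limit) is exactly the paper's strategy. But the core duality inequality carries a sign error that is fatal to the argument as you have written it. The correct pairing --- test the Kato subsolution inequality for $u^-$ against $\rho$ and the adjoint equation against $u^-$ --- gives
\[
\int_\Omega \psi\,u^-\,dx \;+\; \int_\Omega L\bigl(x,D_pH(x,Du)\bigr)\,\mathbbm{1}_{\{u<0\}}\,\rho\,dx \;\le\; \int_\Omega f^-\rho\,dx ,
\]
i.e.\ the Lagrangian appears on the \emph{left} with a \emph{plus} sign, and is then bounded \emph{from below} using $L\ge C_L^{-1}|D_pH|^{\gamma'}-C_L$ from \eqref{L}; the term $C_L^{-1}\int |D_pH|^{\gamma'}\mathbbm{1}_{\{u<0\}}\rho$ is the favorable quantity that, via Young's inequality with exponents $\bigl(\tfrac{q\gamma'}{d},\tfrac{q\gamma'}{q\gamma'-d}\bigr)$, absorbs the $\int |D_pH|^{d/q}\mathbbm{1}_{\{u<0\}}\rho$ produced by Corollary \ref{corfpreg} on the right. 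You instead have the Lagrangian on the \emph{right} and bound it \emph{from above} by $C_L|b|^{\gamma'}+C_L$, turning what should be the key coercive term into an extra term to be absorbed. This changes the structure of the absorption: the coupled inequalities you end up proposing, schematically $X\lesssim \|f^-\|_{L^q}(I+1)+C_L\,I$ and $I\lesssim \|f^-\|_{L^q}(I+1)+X$, are circular and cannot be closed without a smallness assumption on $\|f^-\|_{L^q}$; the ``Young-type rearrangement $X\le AX^\theta+B$'' you invoke does not actually arise from the steps described. In fact, your ``auxiliary estimate on $I$'' is precisely the correct duality identity in disguise: carrying it out correctly one finds directly $\int \psi u^-+C_L^{-1}\int |D_pH|^{\gamma'}\rho\le C_L\int\rho+\int f^-\rho$, so there is nothing to feed back.

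Two secondary points. First, you take the unrestricted drift $b=-D_pH(x,Du)$, whereas the paper restricts it to $\{u<0\}$ in \eqref{adjk}. With the Kato pairing the duality only controls $\int|D_pH|^{\gamma'}\mathbbm{1}_{\{u<0\}}\rho$, so Corollary \ref{corfpreg} should be applied with the drift restricted to that set; with the unrestricted drift you would be missing control of $\int_{\{u\ge 0\}}|D_pH|^{d/q}\rho$. (If instead one pairs the full $u$ against $\rho$, which is what your aside ``using $u$ as a test function'' hints at, the unrestricted drift becomes consistent --- but then it is that identity, not the Kato one, that should be the core step.) Second, for $q>d/\gamma'$ one has $d/q<\gamma'$, so your ``$k_{\mathrm{FP}}=d/q=\gamma'$'' should read $k_{\mathrm{FP}}=d/q<\gamma'$; this strict inequality is precisely what powers the Young absorption and the exponent $q\gamma'/(q\gamma'-d)$ in the statement.
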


\begin{proof}
For $q<\frac{d}{2}$, $k>0$, we define as $\rho=\rho_k$ the weak nonnegative solution to the Dirichlet problem
\begin{equation}\label{adjk}
\begin{cases}
-\Delta \rho(x)+\lambda \rho(x)+\mathrm{div}(D_pH(x,Du(x))\mathbbm{1}_{\{u<0\}}\rho(x))=\psi_2(x) &\text{ in }\Omega\ ,\\
\rho=0 &\text{ on }\partial\Omega\ .
\end{cases}
\end{equation}
where $\psi_2(x)=\frac{[T_k(u^-(x))]^{p-1}}{\|u^-\|_{L^p(\Omega)}^{p-1}}$. By duality as in the case of Proposition \ref{upiu}, using again Kato's inequality as in \cite[p. 148]{GMP14}, \cite{CGpar}, we get
\[
\int_\Omega u^-(x)\psi_2(x)\,dx+\int_\Omega \left[-D_pH(x,Du)\cdot Du^--H(x,Du)\right]\mathbbm{1}_{\{u<0\}}\rho\,dx\leq \int_{\Omega\cap \{u<0\}}f\rho\,dx\ .
\]
By the properties of the Lagrangian \eqref{L}, we estimate
\begin{multline*}
\left[-D_pH(x,Du)\cdot Du^--H(x,Du)\right]\mathbbm{1}_{\{u<0\}}=L(x,D_pH(x,-Du^-))\mathbbm{1}_{\{u<0\}}\\
\geq [C_L^{-1}|D_pH(x,Du)|^{\gamma'}-C_L]\mathbbm{1}_{\{u<0\}}\ .
\end{multline*}
Therefore, we have by Corollary \ref{corfpreg} (note that it can be safely applied since $\gamma>\frac{d+2}{d}$) and the fact that the condition $\|\psi_2\|_{p'}\leq1$ implies that $\|\rho\|_{1}\leq \frac{|\Omega|^\frac1p}{\lambda}$ through \eqref{rhoL1}
\begin{multline*}
\int_\Omega u^-(x)\psi_2(x)\,dx+C_L^{-1}\int_\Omega |D_pH(x,Du)|^{\gamma'}\mathbbm{1}_{\{u<0\}}\rho\,dx\leq C_L\int_\Omega \rho\mathbbm{1}_{\{u<0\}}+\int_\Omega f^-\rho\,dx\\
\leq C_L\frac{|\Omega|^\frac1p}{\lambda}+\|f^-\|_{L^q(\Omega)}\|\rho\|_{L^{q'}(\Omega)}\leq C_L\frac{|\Omega|^\frac1p}{\lambda}+C_1\|f^-\|_{L^q(\Omega)}\left(\int_\Omega |D_pH(x,Du)|^{\frac{d}{q}}\mathbbm{1}_{\{u<0\}}\rho\,dx+1\right).
\end{multline*}
Since $q>d/\gamma'$, we can absorb the second term in the above right-hand side via Young's inequality to obtain
\[
\int_\Omega u^-(x)\psi_2(x)\,dx\leq C_L\frac{|\Omega|^\frac1p}{\lambda}+C_2\left(\|f^-\|_{L^q(\Omega)}^{\frac{q\gamma'}{q\gamma'-d}}+\frac{|\Omega|^\frac1p}{\lambda}+\|f^-\|_{L^q(\Omega)}\right)
\]
and then let $k\to\infty$ to conclude the estimate. The case $q=\frac{d}{2}$ is similar, while the case $q>\frac{d}{2}$ requires to take a nonnegative $\psi=\psi_2\in L^1(\Omega)$ with $\|\psi_2\|_{L^1(\Omega)}=1$ to conclude that $u^-\in L^\infty(\Omega)$ via the same duality argument and exploiting that
\[
\int_\Omega \rho\leq \frac{\|\psi_2\|_{L^1(\Omega)}}{\lambda}=\frac{1}{\lambda}\ .
\]
Indeed, fix $q>\frac{d}{\gamma'}$ and start again with  
\begin{multline*}
\int_\Omega u^-(x)\psi_2(x)\,dx+C_L^{-1}\int_\Omega |D_pH(x,Du)|^{\gamma'}\mathbbm{1}_{\{u<0\}}\rho\,dx\leq C_L\int_\Omega \rho\mathbbm{1}_{\{u<0\}}+\int_\Omega f^-\rho\,dx\\
\leq \frac{C_L}{\lambda}+\|f^-\|_{L^q(\Omega)}\|\rho\|_{L^{q'}(\Omega)}.
\end{multline*}
One then chooses $\sigma$ such that
\[
\frac{1}{q'}=\frac1\sigma-\frac1d
\]
so that taking $q>\frac{d}{2}$ we get the condition $\sigma<\frac{d}{d-1}$. We can then apply Corollary \ref{corfpreg} to get
\[
C_L^{-1}\int_\Omega |D_pH(x,Du)|^{\gamma'}\mathbbm{1}_{\{u<0\}}\rho\,dx\leq \frac{C_L}{\lambda}+C\|f^-\|_{L^q(\Omega)}\left(\int_\Omega |D_pH(x,Du)|^{k}\mathbbm{1}_{\{u<0\}} \rho\,dx+1\right)
\]
with $k=1+\frac{d}{\sigma}=\frac{d}{q}<\gamma'$, and then apply the Young's inequality to conclude the assertion.
\end{proof}
Combining Propositions \ref{upiu} and \ref{umeno} we get the following result under the assumption $q>\frac{d}{\gamma'}$ (recall that $\frac{d}{\gamma'}>\frac{2d}{d+2}$ under the standing restrictions on $\gamma$).
\begin{cor}\label{finalint}
Assume that \eqref{H} holds. Let $u\in W^{2,q}(\Omega)\cap W^{1,q}_0(\Omega)$ be a strong solution to \eqref{hj} with $q>d/\gamma'$. Assume there exists $K>0$ such that $\|f\|_{L^q(\Omega)}\leq K$. There exists a positive constant $C$ depending on $K,d,q,C_H,|\Omega|,\lambda$ such that
\[
\|u\|_{L^p(\Omega)}\leq C
\]
where 
\begin{itemize}
\item $p=\infty$ when $q>\frac{d}{2}$ and any $\gamma>\frac{d+2}{d}$;
\item any $p<\infty$ when $q=\frac{d}{2}>\frac{d}{\gamma'}$ and $\gamma>\frac{d+2}{d}$;
\item $p=\frac{dq}{d-2q}$ when $\frac{d}{\gamma'}<q<\frac{d}{2}$ (which implies $\gamma<2$) and $\gamma>\frac{d+2}{d}$. 
\end{itemize}
\end{cor}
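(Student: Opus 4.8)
The plan is to bound the positive part $u^+$ and the negative part $u^-$ separately and then combine them via $\|u\|_{L^p(\Omega)}\le\|u^+\|_{L^p(\Omega)}+\|u^-\|_{L^p(\Omega)}$, using Proposition \ref{upiu} for the former and Proposition \ref{umeno} for the latter. For the negative part, Proposition \ref{umeno} applies verbatim under \eqref{H} and already delivers the bound with the three exponents $p$ listed in the statement. For the positive part, Proposition \ref{upiu} requires $H\ge0$, while \eqref{H} only gives $H\ge -C_H$; so I would first pass to $\tilde H:=H+C_H\ge0$ and $\tilde f:=f+C_H$. Then $u$ is still a strong solution of $-\Delta u+\lambda u+\tilde H(x,Du)=\tilde f$ in $\Omega$ with $u=0$ on $\partial\Omega$ (and $\tilde H$ still obeys a growth condition of type \eqref{H} with $C_H$ replaced by $2C_H$, although only $\tilde H\ge0$ is actually used), while $\|\tilde f^+\|_{L^q(\Omega)}\le\|f^+\|_{L^q(\Omega)}+C_H|\Omega|^{1/q}\le K+C_H|\Omega|^{1/q}$.

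Since $\gamma>\frac{d+2}{d}$ one has $\frac{d}{\gamma'}>\frac{2d}{d+2}$, hence $q>\frac{d}{\gamma'}>\frac{2d}{d+2}$ and Proposition \ref{upiu} applies to $(\tilde H,\tilde f)$, giving $\|u^+\|_{L^p(\Omega)}\le C_0\|\tilde f^+\|_{L^q(\Omega)}\le C_0(K+C_H|\Omega|^{1/q})$, with $p=\frac{dq}{d-2q}$ when $q<\frac d2$, $p=\infty$ when $q>\frac d2$, and (through the endpoint case of Corollary \ref{corfpreg} invoked in that proof) any finite $p$ when $q=\frac d2$; the constant $C_0$ depends on $d,q,|\Omega|$, and additionally on $\lambda$ when $q\ge\frac d2$. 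Applying Proposition \ref{umeno} to the original data $(H,f)$, with the matching exponents $p$ in each regime, gives $\|u^-\|_{L^p(\Omega)}\le C\,\|f^-\|_{L^q(\Omega)}^{q\gamma'/(q\gamma'-d)}+CC_L\,|\Omega|^{1/p}/\lambda+C\,\|f^-\|_{L^q(\Omega)}\le C(K^{q\gamma'/(q\gamma'-d)}+C_L|\Omega|^{1/p}/\lambda+K)$, with $C=C(d,q,C_H,\gamma)$ and $C_L=C_L(C_H)$.

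Adding the two inequalities then yields $\|u\|_{L^p(\Omega)}\le M$ for a constant $M$ depending only on $K,d,q,\gamma,C_H,|\Omega|,\lambda$; $\lambda$ enters through the $u^-$ estimate and, when $q\ge\frac d2$, also through the $u^+$ estimate. The three cases match automatically: $q>\frac d2$ gives $p=\infty$; $q=\frac d2$ gives any finite $p$ (and, since $q>\frac d{\gamma'}$, forces $\gamma'>2$, i.e.\ $\gamma<2$); and $\frac d{\gamma'}<q<\frac d2$, which likewise forces $\gamma<2$, gives $p=\frac{dq}{d-2q}$. I expect no genuine obstacle here: essentially all the content is in Propositions \ref{upiu} and \ref{umeno}, and the only step requiring a bit of care is the reduction to a nonnegative Hamiltonian needed to invoke Proposition \ref{upiu}, which perturbs the datum — and hence its $L^q$-norm — only by an additive constant $C_H|\Omega|^{1/q}$.
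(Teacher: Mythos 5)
Your proof is correct and follows the same route as the paper, which simply invokes Propositions \ref{upiu} and \ref{umeno}. You also correctly flag and fill a small gap the paper leaves implicit: Proposition \ref{upiu} is stated under $H\ge 0$ while \eqref{H} only guarantees $H\ge -C_H$, and the shift $\tilde H=H+C_H$, $\tilde f=f+C_H$ is the right reduction, perturbing $\|\tilde f^+\|_{L^q}$ by only $C_H|\Omega|^{1/q}$.
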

\begin{proof}
It readily follows from Propositions \ref{upiu} and \ref{umeno}.
\end{proof}
\begin{rem}
An inspection of the proofs suggest that the previous results can be stated for energy solutions, as in \cite{cg20}, weakening thus the a priori requirement $u\in W^{2,q}$.
\end{rem}
\begin{rem}
$L^\infty$ bounds for distributional subsolutions in $W_0^{1,\gamma}$ in the case $\gamma>2$ already appeared in \cite[Theorem 5.2]{DAP} through a Stampacchia-type argument under the restriction $f\in L^q$, $q>\frac{d}{\gamma}$ (note that when $\gamma>2$ we have $\frac{d}{\gamma}<\frac{d}{2}$). When $\gamma=2$ the $L^\infty$ bounds for solutions belonging a priori to $W_0^{1,2}\cap L^\infty$ were proved in \cite[Theorem 2.1]{BMP92} again via a Stampacchia's method. Instead, our $L^\infty$ bound from Corollary \ref{finalint} holds for any $\gamma>1+\frac{2}{d}$, and thus provide a unified proof for this end-point integral estimate. We further remark that, in general, the $L^\infty$ estimate is false when $\gamma\leq 2$ if $u$ does not belong to a suitable class of solutions, cf \cite[Remark 5.1]{DAP} or \cite[Remark 2.1]{BMP92}. Indeed, Example 1.3 in \cite{GMP14} shows that there exists an unbounded weak solution for problems with subquadratic growth in the gradient if $u$ does not satisfy the condition
\[
|u|^\tau\in W^{1,2}_0(\Omega)\ ,\tau=\frac{(d-2)(\gamma-1)}{2(2-\gamma)}\ .
\]
We observe that in the limiting case $q=\frac{d}{\gamma'}$, for $\gamma<2$, the membership $u\in W^{2,q}$ leads to the above condition when $\gamma<2$. Indeed, if $u\in W^{2,q}$ for $q=\frac{d}{\gamma'}$, it follows by Sobolev embeddings that $u\in W^{1,\frac{dq}{d-q}}$, i.e. $u\in W^{1,d(\gamma-1)}$. This in particular implies, applying again the Sobolev inequality, that $u\in L^{\frac{d(\gamma-1)}{2-\gamma}}$, i.e. $|u|^\tau\in L^{2^*}\hookrightarrow L^2$. Finally, $D|u|^\tau\in L^2$ when $|u|^{\tau-1}|Du|\in L^2$, which is true by the H\"older's inequality using that $u\in L^{\frac{d(\gamma-1)}{2-\gamma}}$ together with $Du\in L^{d(\gamma-1)}$. Finally, for the case of natural gradient growth $\gamma=2$, the Sobolev embedding for $q>\frac{d}{2}$ implies that $u\in L^\infty$, and hence we are in the same situation of \cite[Remark 2.1]{BMP92}, where solutions belong a priori to $L^\infty$.
\end{rem}
\subsubsection{The end-point threshold $q=\frac{d}{\gamma'}$}
The results of the previous section do not encompass the critical integrability value $q=\frac{d}{\gamma'}$. In this case one expects integrability estimates to depend on finer properties than the sole control of $f\in L^q$, see e.g. \cite{GMP14}. Aim of this section is to derive integrability estimates in this end-point situation via duality methods, together with establishing new stability properties in Lebesgue spaces for strong solutions. The approach of this section is inspired by that appeared in the context of parabolic problems in \cite[Corollary 3.4]{CGpar}.\\
Consider for $\lambda>0$ the Dirichlet problem with truncated right-hand side datum
\begin{equation}\label{hjtrunc}
\begin{cases}
- \Delta u_k(x) + H(x, Du_k(x)) +\lambda u_k(x) = T_k(f(x)) & \text{in $\Omega$} \\
u_k(x) = 0 & \text{on $\partial\Omega$,}
\end{cases}
\end{equation}
Note that now the right-hand side of the equation belongs to $L^\infty(\Omega)$, and existence and uniqueness of strong solutions in $W^{2,q}$ for the Dirichlet problem is guaranteed by the results proved in e.g. \cite[Lemma 3]{AC}, see also \cite{MaSoBook,KK}.

\begin{prop}\label{stabLpest}
Assume that \eqref{H} holds. Let $u$ and $u_k$ be strong solutions to \eqref{hj} and \eqref{hjtrunc} respectively, $\gamma<2$ and $q=\frac{d}{\gamma'}$. Then, there exists a constant $C$ depending on $f,C_H,q,d,|\Omega|, \lambda$ such that
\begin{equation}\label{stabLp}
\|u-u_k\|_{L^p(\Omega)}\leq C\|f-T_k(f)\|_{L^q(\Omega)}
\end{equation}
where $p=\frac{dq}{d-2q}=\frac{d(\gamma-1)}{2-\gamma}$.
\end{prop}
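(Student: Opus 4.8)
\emph{Proof plan.} Set $w:=u-u_k$ and $g_k:=f-T_k(f)$, so that $\|g_k\|_{L^q(\Omega)}\to0$ and $g_k$ inherits the equi-integrability of $f$ (it is supported, up to sign, on $\{|f|>k\}$). Subtracting the two equations, $-\Delta w+\lambda w+H(x,Du)-H(x,Du_k)=g_k$, and I would estimate $w^+$ and $w^-$ separately by the $L^p$-duality scheme of Propositions~\ref{upiu}--\ref{umeno}. By convexity $H(x,Du)-H(x,Du_k)\ge D_pH(x,Du_k)\cdot Dw$ and $H(x,Du_k)-H(x,Du)\ge D_pH(x,Du)\cdot(Du_k-Du)$; hence Kato's inequality (together with $Dw\,\mathbbm{1}_{\{w>0\}}=Dw^+$ and $-Dw\,\mathbbm{1}_{\{w<0\}}=Dw^-$ a.e.) gives
\[
-\Delta w^+ +\lambda w^+ +D_pH(x,Du_k)\cdot Dw^+\le g_k^+,\qquad -\Delta w^- +\lambda w^- +D_pH(x,Du)\cdot Dw^-\le g_k^-.
\]
Let $\rho=\rho_{k,j}$ solve \eqref{fp} with $\psi:=[T_j(w^\pm)]^{p-1}/\|w^\pm\|_{L^p(\Omega)}^{p-1}$ (so $\|\psi\|_{L^{p'}}\le1$) and drift $b=-D_pH(x,Du_k)\mathbbm{1}_{\{w>0\}}$, resp. $b=-D_pH(x,Du)\mathbbm{1}_{\{w<0\}}$. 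Testing each inequality against $\rho$ and the adjoint against $w^\pm$, the drift contributions cancel exactly (as $Dw^\pm$ is supported where $b\neq0$), and letting $j\to\infty$ yields $\|w^\pm\|_{L^p(\Omega)}\le\|g_k\|_{L^q(\Omega)}\|\rho\|_{L^{q'}(\Omega)}$, so everything reduces to bounding $\|\rho\|_{L^{q'}(\Omega)}$ uniformly in $k,j$. Since $1+\tfrac2d<\gamma<2$ forces $\tfrac{2d}{d+2}<q=\tfrac d{\gamma'}<\tfrac d2$, Corollary~\ref{corfpreg} (with exponent $d/q=\gamma'$) and \eqref{rhoL1} give
\[
\|\rho\|_{L^{q'}(\Omega)}\le C\Big(\int_\Omega|b|^{\gamma'}\rho\,dx+1\Big),\qquad\int_\Omega\rho\,dx\le\frac{|\Omega|^{1/p}}{\lambda}.
\]

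For $w^-$ this closes immediately: $u\in W^{2,q}(\Omega)\hookrightarrow W^{1,d(\gamma-1)}(\Omega)$ and $(\gamma-1)\gamma'=\gamma$, so the \emph{fixed} function $|D_pH(x,Du)|^{\gamma'}\le C(|Du|^\gamma+1)$ lies in $L^q(\Omega)$; splitting $\int_\Omega|D_pH(x,Du)|^{\gamma'}\mathbbm{1}_{\{w<0\}}\rho\,dx$ at a level $M$ bounds the high part by $\eta(M)\|\rho\|_{L^{q'}(\Omega)}$ with $\eta(M)\to0$ depending only on $u$ (absolute continuity of the integral) and the low part by $M|\Omega|^{1/p}/\lambda$; choosing $M$ with $C\eta(M)\le\tfrac12$ and absorbing yields $\|\rho\|_{L^{q'}(\Omega)}\le\bar C=\bar C(u,d,q,|\Omega|,\lambda,C_H)$, hence $\|w^-\|_{L^p(\Omega)}\le\bar C\|f-T_k(f)\|_{L^q(\Omega)}$.

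For $w^+$ the drift involves $Du_k$, not controlled uniformly in $k$ at the endpoint, and here I would proceed in three steps. (i) $\{u_k\}$ is bounded in $W^{2,q}(\Omega)$ uniformly: Proposition~\ref{upiu} gives $\|u_k^+\|_{L^p}\le C\|f^+\|_{L^q}$, while running the duality argument of Proposition~\ref{umeno} for $u_k^-$ at $q=\tfrac d{\gamma'}$ with $\int_\Omega(T_kf)^-\rho$ split over $\{|f|>k_0\}\cup\{|f|\le k_0\}$ — the first piece carrying the small factor $\||f|\mathbbm{1}_{\{|f|>k_0\}}\|_{L^q}$ (this is where the equi-integrability of $f$ enters, uniformly in $k$), the second $\le k_0|\Omega|^{1/p}/\lambda$ — lets the energy term be absorbed and gives $\|u_k^-\|_{L^p}\le C$; feeding this lower-order bound into the Calderón–Zygmund estimate for $-\Delta u_k=T_kf-\lambda u_k-H(x,Du_k)$ and a Gagliardo–Nirenberg inequality whose interpolation exponent satisfies $\theta\gamma<1$ (possible because $d(\gamma-1)<p$) closes, via Young, to a uniform bound on $\|u_k\|_{W^{2,q}}$. (ii) Hence $Du_k$ is bounded in $L^{d(\gamma-1)}(\Omega)$ and, by the compact embedding $W^{1,q}\hookrightarrow L^r$ ($r<d(\gamma-1)$) and uniqueness of the $W^{2,q}$-solution of \eqref{hj}, $Du_k\to Du$ in $L^r(\Omega)$ for every $r<d(\gamma-1)$. (iii) Then $|D_pH(x,Du_k)|^{\gamma'}\le C(|Du|^\gamma+|Du-Du_k|^\gamma+1)$, and $\int_\Omega|D_pH(x,Du_k)|^{\gamma'}\mathbbm{1}_{\{w>0\}}\rho\,dx$ splits into a part treated as for $w^-$ plus $\int_\Omega|Du-Du_k|^\gamma\rho\,dx\le\||Du-Du_k|^\gamma\|_{L^{q-\eps}}\|\rho\|_{L^{(q-\eps)'}(\Omega)}$, the last norm being controlled by $\|\rho\|_{L^{q'}(\Omega)}$ via $\|\rho\|_{L^1}\le|\Omega|^{1/p}/\lambda$ and the first factor tending to $0$ by (ii). Absorbing for $k$ large — and, for the finitely many small $k$, using that $T_kf\in L^\infty$ makes $u_k$ Lipschitz, so $|D_pH(x,Du_k)|^{\gamma'}\in L^\infty$ and $\int|D_pH(x,Du_k)|^{\gamma'}\mathbbm{1}\rho\le C_k\int\rho<\infty$ — gives $\|\rho\|_{L^{q'}(\Omega)}\le C$ uniformly, whence $\|w^+\|_{L^p(\Omega)}\le C\|f-T_k(f)\|_{L^q(\Omega)}$; together with the bound on $w^-$ this is \eqref{stabLp}, with $C=C(f,C_H,q,d,|\Omega|,\lambda)$.

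The main obstacle is step (iii), i.e. the absorption of the energy term $\int_\Omega|D_pH(x,Du_k)|^{\gamma'}\mathbbm{1}\rho$ in the Fokker–Planck bound exactly at $q=d(\gamma-1)/\gamma$: unlike the subcritical regime this cannot be achieved by a bare Young's inequality (the exponents are balanced, mirroring the failure of the interpolation behind Theorem~\ref{main}), and only the interplay of the equi-integrability of $f$, the uniform $W^{2,q}$-bound for the truncations and the ensuing strong $W^{1,r}$-convergence supplies the small multiplicative factor needed — which is also why $C$ must be allowed to depend on $f$ beyond $\|f\|_{L^q}$.
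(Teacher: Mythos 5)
Your proof takes a genuinely different route from the paper's, and it has real gaps. The paper's key device — which your plan misses entirely — is the \emph{Lagrangian duality identity}: testing \eqref{hjtrunc} against $\rho$ and \eqref{pbl3} against $u_k$ gives
\[
\int_\Omega L(x,D_pH(x,Du_k))\,\rho\,dx=-\int_\Omega T_k(f)\,\rho\,dx+\int_\Omega u_k\,\psi_3\,dx,
\]
and since $L(x,\nu)\ge C_L^{-1}|\nu|^{\gamma'}-C_L$, this \emph{directly} bounds the energy term $\int_\Omega |D_pH(x,Du_k)|^{\gamma'}\rho\,dx$ uniformly in $k$ (after splitting $\int f^-\rho$ at a level $h$ chosen via equi-integrability, and using Proposition~\ref{upiu} for $\int u_k\psi_3$), without ever invoking a Sobolev bound on $u_k$ or strong convergence of $Du_k$. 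Your plan replaces this by steps (i)--(iii), and both (i) and (iii) fail.

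Step (i) does not work. At $q=d/\gamma'$ with lower-order control only in $L^p$, $p=\frac{d(\gamma-1)}{2-\gamma}=\frac{dq}{d-2q}$, the Gagliardo--Nirenberg constraint $\frac{1}{q\gamma}=\frac1d+\theta(\frac1q-\frac2d)+(1-\theta)\frac1s$ is degenerate for $s=p$ (both sides equal $\frac1q-\frac1d$ for any $\theta$) and forces $\theta=1$ for any admissible $s\ne p$ (and also for $s=\infty$): the numerator $\frac1{q\gamma}-\frac1d-\frac1s$ equals the denominator $\frac1q-\frac2d-\frac1s$ exactly at the endpoint. So $\theta\gamma=\gamma>1$ and Young's inequality does not close; this is precisely the failure of the interpolation argument that Theorem~\ref{main2} is designed to circumvent, so the claimed uniform $W^{2,q}$ bound on $\{u_k\}$ cannot be obtained this way. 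Your parenthetical ``possible because $d(\gamma-1)<p$'' does not address the GN constraint: $d(\gamma-1)<p$ always holds for $\gamma>1$ and carries no information about $\theta$.

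Step (iii) also contains an integrability mismatch: you estimate $\int_\Omega |Du-Du_k|^\gamma\rho\,dx\le \||Du-Du_k|^\gamma\|_{L^{q-\eps}}\|\rho\|_{L^{(q-\eps)'}}$ and claim $\|\rho\|_{L^{(q-\eps)'}}$ is controlled by $\|\rho\|_{L^{q'}}$ and $\|\rho\|_{L^1}$. But $(q-\eps)'>q'$, so $L^{(q-\eps)'}$ is \emph{more} integrable than $L^{q'}$; interpolation between $L^1$ and $L^{q'}$ only yields exponents in $[1,q']$, and Corollary~\ref{corfpreg} only provides $\|\rho\|_{L^{q'}}$. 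This term is not controlled.

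On the positive side: your treatment of $w^-$ (drift $-D_pH(x,Du)\mathbbm{1}_{\{w<0\}}$, absolute continuity of $|Du|^\gamma\in L^q$ to generate the small factor) is a correct alternative to the Lagrangian duality for that half of the argument, and you are right that the natural drift for $w^-$ from convexity is $D_pH(x,Du)$, not $D_pH(x,Du_k)$ (the paper's $\hat\rho$-problem should be read accordingly). But the absence of the Lagrangian identity and the two gaps above prevent the plan, as written, from handling $w^+$.
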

\begin{proof}
Let $w=u-u_k$. Here, $w$ depends on $k$, but we will drop the subscript $k$. Let $\rho$ be the weak solution to the Dirichlet problem
\begin{equation}\label{pbl3}
\begin{cases}
-\Delta \rho(x)+\lambda \rho(x)-\mathrm{div}(D_pH(x,Du_k(x))\rho(x))=\psi_3(x) &\text{ in }\Omega\ ,\\
\rho=0 &\text{ on }\partial\Omega\ .
\end{cases}
\end{equation}
where
\[
\psi_3(x)=\frac{[w^+(x)]^{p-1}}{\|w^+\|_{L^p(\Omega)}^{p-1}}\ .
\]
As in Propositions \ref{upiu} and \ref{umeno}, one should truncate the right-hand side term $\psi_3$ to ensure the existence of solutions on energy spaces, and then pass to the limit. We will drop this step for simplicity.
We first proceed by proving a bound on $\int_\Omega |D_pH(x,Du_k))|^{\gamma'}\rho\,dx$. By duality arguments, testing \eqref{hjtrunc} with $\rho$ and \eqref{pbl3} with $u_k$ we have
\begin{equation}\label{mainLstab}
\int_\Omega L(x,D_pH(x,Du_k))\rho\,dx= -\int_\Omega T_k(f(x))\rho(x)\,dx+\int_\Omega u_k(x)\psi_3(x)\ .
\end{equation}
For $h>0$ to be chosen we write by \eqref{rhoL1}
\begin{multline*}
-\int_\Omega T_k(f(x))\rho(x)\,dx\leq \int_\Omega f^-(x)\rho(x)\,dx\leq \int_{\Omega\cap\{f^-\geq h\}} f^-\rho\,dx+h\frac{|\Omega|^\frac1p}{\lambda}\\
\leq \|f^-\mathbbm{1}_{\{f^-\geq h\}}\|_{L^q(\Omega)}\|\rho\|_{L^{q'}(\Omega)}+h\frac{|\Omega|^\frac1p}{\lambda}\ .
\end{multline*}
We then use Proposition \ref{upiu} applied to $u_k$ to deduce
\[
\int_\Omega u_k(x)\psi_3(x)\leq \|u_k\|_{L^p(\Omega)}\|\psi_3\|_{L^{p'}(\Omega)}\leq C_0\|T_k(f^+)\|_{L^q(\Omega)}\leq C_0\|f^+\|_{L^q(\Omega)}\ .
\]
We then conclude
\begin{multline*}
C_L^{-1}\int_\Omega |D_pH(x,Du_k))|^{\gamma'}\rho\,dx-C_L\int_\Omega\rho\,dx\leq \frac{h|\Omega|^\frac1p}{\lambda}+C_0\|f^+\|_{L^q(\Omega)}\\
+\|f^-\mathbbm{1}_{\{f^-\geq h\}}\|_{L^q(\Omega)}\|\rho\|_{L^{q'}(\Omega)}\ .
\end{multline*}
Plugging the previous estimates on \eqref{mainLstab} it follows that
\begin{multline*}
C_L^{-1}\int_\Omega |D_pH(x,Du_k))|^{\gamma'}\rho\,dx\leq \frac{|\Omega|^\frac1p}{\lambda}(h+C_L)+C_0\|f^+\|_{L^q(\Omega)}\\
+C_1\|f^-\mathbbm{1}_{\{f^-\geq h\}}\|_{L^q(\Omega)}\left(\int_\Omega |D_pH(x,Du_k))|^{\gamma'}\rho\,dx+1\right)
\end{multline*}
with $C_1$ depending on $d,\gamma,\Omega$. We finally choose $h$ large enough to ensure
\[
\|f^-\mathbbm{1}_{\{f^-\geq h\}}\|_{L^q(\Omega)}\leq\frac{1}{2C_1C_L}
\]
and absorb the term on the right-hand side on the left-hand one, concluding the bound
\[
\int_\Omega |D_pH(x,Du_k))|^{\gamma'}\rho\,dx\leq 2C_L\left[\frac{|\Omega|^\frac1p}{\lambda}(h+C_L)+C_0\|f^+\|_{L^q(\Omega)}\right]+1=:\widetilde C\ .
\]
We are now in position to obtain the estimate on $w^+$. It is immediate to observe that by convexity of $H(x,\cdot)$, $w^+$ is a weak subsolution to
\[
-\Delta w^++\lambda w^++D_pH(x,Du_k(x))\cdot Dw^+\leq [f-T_k(f)]\mathbbm{1}_{\{w>0\}}\ .
\]
By duality we conclude
\[
\|w^+\|_{L^p(\Omega)}=\int_\Omega w^+(x)\psi_3(x)\,dx\leq \int_\Omega [f-T_k(f)]\mathbbm{1}_{\{w>0\}}\rho\,dx
\]
and thus by Corollary \ref{corfpreg}
\begin{multline*}
\|w^+\|_{L^p(\Omega)}\leq C_1\|f-T_k(f)\|_{L^{q}(\Omega)}\left(\int_\Omega |D_pH(x,Du_k))|^{\gamma'}\rho\,dx+1\right)\\
\leq C_1(\widetilde C+1)\|f-T_k(f)\|_{L^q(\Omega)}\ .
\end{multline*}
Finally, to obtain the integral estimates on $w^-$ (and hence conclude the desired estimate on $w$) one can proceed using the same scheme considering the dual problem
\begin{equation*}
\begin{cases}
-\Delta \hat \rho(x)+\lambda \hat \rho(x)-\mathrm{div}(D_pH(x,Du_k(x))\hat \rho(x))=\psi_4(x) &\text{ in }\Omega\ ,\\
\hat \rho=0 &\text{ on }\partial\Omega\ ,
\end{cases}
\end{equation*}
where now
\[
\psi_4(x)=\frac{[w^-(x)]^{p-1}}{\|w^-\|_{L^p(\Omega)}^{p-1}}\ .
\] 
\end{proof}
\begin{rem}\label{equi}
First, we observe that \eqref{stabLp} readily implies
\[
\|u\|_{L^p(\Omega)}\leq C
\]
when $p=\frac{d(\gamma-1)}{2-\gamma}$, $q=\frac{d}{\gamma'}$. However, the dependence of the constant $C$ in the above estimate is different with respect to the $L^p$ bound obtained in Corollary \ref{finalint}. Here, $C$ depends on $\|f^+\|_{L^q(\Omega)},h$ where $h$ has been chosen so that
\[
\|f^-\mathbbm{1}_{\{f^-\geq h\}}\|_{L^q(\Omega)}\leq\frac{1}{2C_1C_L}\ .
\]
It is worth observing that these constants remain bounded whenever $f$ varies in bounded and equi-integrable sets in $L^q$, cf Definition 2.23 and Theorem 2.29 in \cite{FL}.

\end{rem}
\begin{rem}
The estimate $u\in L^p$ with $p=\frac{d(\gamma-1)}{2-\gamma}$, $q=\frac{d}{\gamma'}$ agrees with the one already found in \cite[Theorem 1.1]{GMP14}. Indeed, the authors in \cite{GMP14} proved that $|u|^\tau\in W_0^{1,2}(\Omega)$ with $\tau= \frac{(d-2)(\gamma-1)}{2(2-\gamma)}$ and the same type of dependence in the constant of the estimate. By Sobolev embedding this yields $|u|^\tau\in L^{\frac{2d}{d-2}}$, which, by the definition of $\tau$, yields $u\in L^p$ with $p$ as above.
\end{rem}

\subsection{Maximal $L^q$-regularity for the Dirichlet problem}\label{max}

\begin{proof}[Proof of Theorem \ref{main}]
We first use Theorem \ref{cz} to conclude that the strong solutions to \eqref{hj} satisfy the estimate
\[
\|u\|_{W^{2,q}(\Omega)}\leq C_1(\||Du|\|_{L^{q\gamma}(\Omega)}^\gamma+\|f\|_{L^q(\Omega)}+1)
\]
where $C_1$ depends on $C_H,q,d,\Omega,\lambda$. We then proceed by interpolation as in \cite{CGpar} (see also the references therein for other regularity results obtained through a similar scheme). First, we recall that from Corollary \ref{finalint} we have
\begin{equation}\label{Ls}
\|u\|_{L^s(\Omega)}\leq C
\end{equation}
for any $s\leq p=\frac{dq}{d-2q}$ if $q<\frac{d}{2}$, $s\leq \infty$ when $q>\frac{d}{2}$, with $C$ depending on the previous quantities. The Gagliardo-Nirenberg inequality, cf \cite{Nirenberg}, gives the existence of a positive constant $C_{\mathrm{GN}}$ such that
\begin{equation}\label{Dugammaq}
\||Du|\|_{L^{q\gamma}(\Omega)}\leq C_{\mathrm{GN}}\|D^2u\|_{L^q(\Omega)}^{\theta}\|u\|_{L^{s}(\Omega)}^{1-\theta}+\|u\|_{L^s(\Omega)},
\end{equation}
for $s\in[1,\infty]$ and $\theta\in[1/2,1)$ such that
\[
\frac{1}{\gamma q}=\frac1d+\theta\left(\frac1q-\frac{2}{d}\right)+(1-\theta)\frac1s.
\]
Since $q>\frac{d}{\gamma'}$, it follows that $p>\frac{d(\gamma-1)}{2-\gamma}$, and we are then free to choose $s$ close to $\frac{d(\gamma-1)}{2-\gamma}$ to ensure $\theta\in[1/2,1/\gamma)$. We finally conclude by plugging \eqref{Ls} into \eqref{Dugammaq} and get
\[
\|u\|_{W^{2,q}(\Omega)}\leq C_2(\|D^2u\|_{L^q(\Omega)}^{\theta\gamma}+\|f\|_{L^q(\Omega)}+1)\ ,
\]
which gives the estimate by applying the Young's inequality since $\theta\gamma<1$.

\end{proof}
\begin{proof}[Proof of Theorem \ref{main2}]
We first discuss (i). Let $w=u-u_k$, $k$ to be chosen later, where $u_k$ solves the Dirichlet problem with truncated right-hand side \eqref{hjtrunc}. By Proposition \ref{stabLpest} we have the existence of a constant $C>0$ such that
\[
\|w\|_{L^p(\Omega)}\leq C\|f-T_k(f)\|_{L^q(\Omega)}\ ,p=\frac{d(\gamma-1)}{2-\gamma}\ .
\]
In view of the Gagliardo-Nirenberg inequality we get
\[
\|Dw\|_{L^{q\gamma}(\Omega)}\leq C_2\|w\|_{W^{2,q}(\Omega)}^\frac1\gamma\|w\|_{L^p(\Omega)}^{1-\frac1\gamma}\ ,
\]
where $C_2=C_2(d,p,q,\Omega)$. Thus, we write
\begin{equation}\label{Dww}
\|Dw\|_{L^{q\gamma}(\Omega)}^\gamma\leq C_2^\gamma C^{\gamma-1}\|f-T_k(f)\|_{L^q(\Omega)}^{\gamma-1}\|w\|_{W^{2,q}(\Omega)}\ .
\end{equation}
We further observe that $w$ solves the Dirichlet problem
\begin{equation}\label{auxw}
\begin{cases}
-\Delta w+\lambda w=H(x,Du_k(x))-H(x,Du(x))+f(x)-T_k(f(x))&\text{ in }\Omega\ ,\\
w=0&\text{ on }\partial\Omega\ ,
\end{cases}
\end{equation}
and, in view of the growth assumptions \eqref{H}, we have $|D_pH(x,p)|\leq \widetilde C_H(|p|^{\gamma-1}+1)$ for some $\widetilde C_H>0$, and therefore we deduce by the Young's inequality
\begin{multline*}
|H(x,Du_k(x))-H(x,Du(x))|\leq |Dw(x)|\cdot \max\{|D_pH(x,Du_k(x))|,|D_pH(x,Du(x))|\}\\
\leq C_3(|Du_k(x)|^\gamma+|Du(x)|^\gamma+|Dw(x)|^\gamma+1)\leq C_4(|Du_k(x)|^\gamma+|Dw(x)|^\gamma+1)\ ,
\end{multline*}
where $C_3,C_4$ depend only on $C_H$. We then apply Theorem \ref{cz} to the Dirichlet problem \eqref{auxw} to find
\begin{multline*}
\|w\|_{W^{2,q}(\Omega)}\leq C_5(\|H(x,Du_k(x))-H(x,Du(x))\|_{L^q(\Omega)}+\|f-T_k(f)\|_{L^q(\Omega)})\\
\leq C_6\|Dw\|_{L^{q\gamma}(\Omega)}^\gamma+C_6(\|Du_k\|_{L^q(\Omega)}^\gamma+\|f-T_k(f)\|_{L^q(\Omega)}+1)\ .
\end{multline*}
We plug the latter inequality into \eqref{Dww} to obtain
\begin{multline*}
\|Dw\|_{L^{q\gamma}(\Omega)}^\gamma\leq C_6C_2^\gamma C^{\gamma-1}\|f-T_k(f)\|_{L^q(\Omega)}^{\gamma-1}\|Dw\|_{L^{q\gamma}(\Omega)}^\gamma\\
+C_6(\|Du_k\|_{L^q(\Omega)}^\gamma+\|f-T_k(f)\|_{L^q(\Omega)}+1)C_2^\gamma C^{\gamma-1}\|f-T_k(f)\|_{L^q(\Omega)}^{\gamma-1}\ ,
\end{multline*}
for a constant $C_6=C_6(d,q,C_H)$. We then pick a certain $\bar k$ large enough to have
\begin{equation}\label{ksegnato}
C_6C_2^\gamma C^{\gamma-1}\|f-T_{\bar k}(f)\|_{L^q(\Omega)}^{\gamma-1}\leq\frac12
\end{equation}
and conclude
\[
\|Dw\|_{L^{q\gamma}(\Omega)}^\gamma\leq \|Du_{\bar k}\|_{L^{q\gamma}(\Omega)}^\gamma+\|f-T_{\bar k}(f)\|_{L^q(\Omega)}+1
\leq \|Du_{\bar k}\|_{L^{q\gamma}(\Omega)}^\gamma+2\|f\|_{L^q(\Omega)}+1\ .
\]
We can then apply Theorem \ref{main} to estimate $\|Du_{\bar k}\|_{q\gamma}$ since $T_{\bar k}(f)\in L^\infty$ (actually, $Du$ turns out to be even bounded via the results in \cite{Lions85}) and get for any $\bar q>q$ the bound
\[
\|Du_{\bar k}\|_{L^{\bar q\gamma}(\Omega)}\leq C_{\bar k}\ ,
\]
where $C_{\bar k}$ depends on $\bar k,q,d,C_H,|\Omega|$ (indeed, $\|T_{\bar k}(f)\|_{L^{\bar q}(\Omega)}\leq \bar k|\Omega|^{\frac{1}{\bar q}}$). This allows to conclude the desired estimate, since by the definition of $w$ we have
\[
\|Du\|_{L^{q\gamma}(\Omega)}\leq \|Dw\|_{L^{q\gamma}(\Omega)}+\|Du_{\bar k}\|_{L^{q\gamma}(\Omega)}\ .
\]
The case (ii) can be proved in the same manner and the estimates will depend in addition on $\|u\|_{L^q}$, as observed above the statement of Theorem \ref{main}. A full estimate depending only on $f$ can be achieved as pointed out in Remark \ref{remfull}.
\end{proof}
\begin{rem}
The constant of the estimate in Theorem \ref{main2} remains bounded when $f$ varies in a bounded and equi-integrable set $\mathcal{F}\subset L^q(\Omega)$, see the previous Remark \ref{equi}. Indeed, the constant of the estimate depends on $\bar k$ that appears in \eqref{ksegnato}, where $\bar c=(2C_6C_2^\gamma C^{\gamma-1})^{-1}$ is independent of $f\in\mathcal{F}$.
\end{rem}
\begin{rem}\label{sulorentz}
In view of the recent works for the Hamilton-Jacobi equation in \cite{PhucCPDE,MercaldoCPAA}, one might expect the validity of maximal regularity estimates for the Dirichlet problem when the source term $f$ belongs to the end-point Lorentz scale $L(\frac{d}{\gamma'},\infty)$. Some results in this direction when $\gamma<\frac{d+2}{d}$ without zero-th order terms have been obtained in \cite{MercaldoCPAA}. Still, when $f\in L(d,1)$ one should expect boundedness of the gradient, at least in the context of interior estimates and problems with periodic data, as analyzed in \cite{KM} for different classes of nonlinear elliptic equations. The general case at these critical integrability scales will be the matter of a future research.
\end{rem}
\section{Beyond the subquadratic growth}\label{quad}
As already discussed in \cite{CGpar} (see also \cite{BensoussanBook} for a similar approach), to address the problem of maximal regularity when $\gamma= 2$ in \eqref{H} (and also $\gamma>2$), one can no longer apply the classical Gagliardo-Nirenberg interpolation inequality on Lebesgue spaces with $s=\infty$, since $\theta\gamma=2\theta=1$, which in turn prevents from the absorption of the second order derivative term on the left-hand side of the estimate. Though the purely quadratic case can be investigated using an exponential change of variable, as discussed in \cite{Napoli}, the case of a general Hamiltonian with quadratic growth can be addressed exploiting the Miranda-Nirenberg interpolation inequalities, that allow to interpolate the first order Sobolev space with a H\"older class, cf e.g. \cite[Section 1.1 p. 9]{BensoussanBook}. Indeed, for $\theta\in\left[\frac{1-\alpha}{2-\alpha},1\right]$, $\alpha\in(0,1)$, and $r,q,\theta$ satisfying the compatibility conditions
\[
\frac{1}{2q}=\frac{1}{d}+\theta\left(\frac{1}{r}-\frac2d\right)-(1-\theta)\frac{\alpha}{d},
\]
any function $u\in W^{2,r}(\Omega)\cap C^\alpha(\overline{\Omega})$ belongs to $W^{1,2q}(\Omega)$. Then we have, choosing $\theta=\frac{1-\alpha}{2-\alpha}$, the strict inequality $2\theta<1$ and the estimate
\[
\|Du\|_{L^{2q}(\Omega)}\leq C\|D^2u\|_{L^r(\Omega)}^{1-\theta}[u]_{C^\alpha(\Omega)}^\theta+[u]_{C^\alpha(\Omega)}\ ,r=q\frac{2-2\alpha}{2-\alpha}\ ,
\]
where $[\cdot]_{C^\alpha}$ is the H\"older seminorm.
Then, since $r<q$ for any $\alpha>0$, one can conclude the statement once solutions to the Dirichlet problem \eqref{hj} satisfy a H\"older estimate. Since $u\in L^\infty(\Omega)$ by Corollary \ref{finalint} when $q>\frac{d}{2}$, we can regard \eqref{hj} as
\[
-\Delta u+H(x,Du)=-\lambda u+f=:g\in L^q\ ,q>\frac{d}{2}
\]
and conclude invoking the H\"older estimates for the Dirichlet problem in \cite[Theorem 2.2 p.441]{LuBook} for the case of the quadratic growth. The case of arbitrary first-order terms with natural growth in the endpoint case $q=\frac{d}{2}$ remains an open problem, even in the time-dependent setting under the (parabolic) assumption $q=\frac{d+2}{2}$, and it will be the matter of future research. The case $\gamma>2$ is deeper and has been addressed in \cite{CV} for elliptic problems, in the context of interior estimates. Instead, global bounds for problems equipped with Dirichlet boundary conditions in the superquadratic regime remains at this stage an open problem. Nonetheless, following the proof of \cite{CGpar}, one can get maximal regularity through the adjoint method under the slightly stronger assumption $q>\frac{d(\gamma-1)}{2}$, where $\frac{d(\gamma-1)}{2}>\frac{d(\gamma-1)}{\gamma}$ for $\gamma>2$.

\section{Further remarks on maximal regularity and some open problems}\label{op}
The maximal regularity results proved in this paper, together with the previous ones in \cite{CGell} and the subsequent developments in \cite{CGpar,CV,GP}, provide bounds of the form
\[
\|D^2u\|_q+\||Du|^\gamma\|_q\leq C_1
\]
where $C_1$ is an implicit constant depending on the data of the problem, and in particular on $\|f\|_q$. Following P.-L. Lions \cite{Napoli}, we can regard this statement as a weak version of the maximal regularity. One may ask whether a stronger statement can be true, i.e. regarding the possible validity of an estimate of the form
\[
\|D^2u\|_q+\||Du|^\gamma\|_q\leq C_2\|f-\lambda u\|_q,
\]
where the maximal regularity estimate holds with an explicit linear (or even nonlinear) dependence respect to the right-hand side datum $f$. In this case we refer to this statement as the strong maximal $L^q$-regularity. When $q$ is close to $d$ such a strong estimate was proved in \cite{Lions85}, and later extended in \cite{CGL} for problems driven by the $p$-Laplacian and power-growth gradient terms. A strong form of the maximal regularity for the $p$-Poisson equation without lower order terms can be found in \cite{CianchiMazya}.\\
 When $\frac{d}{\gamma'}\leq q<d$ the validity of this stronger form of the maximal regularity for equation \eqref{hj} is, at this stage, an open problem in its full generality. Nonetheless, some cases as $d=1$ or $d=q=\gamma=2$, and also $q>d-c(d,\gamma)(>\frac{d}{\gamma'})$ when $d\geq3$, in the viscous case have been already discussed and proved by P.-L. Lions \cite{Napoli}.

\appendix

\section{Auxiliary results}\label{app}
We recall the following $W^{2,q}$ a priori estimate for strong solutions to linear elliptic problems.

\begin{thm}\label{cz}
Let $\Omega\subset \R^d$ be a $C^2$ bounded domain. If $u\in W^{2,q}(\Omega)\cap W^{1,q}_0(\Omega)$, $g\in L^q(\Omega)$ satisfies $-\Delta u+\lambda u=g$ a.e. in $\Omega$, $\lambda\in\R$, $\lambda>0$, then 
\[
\|u\|_{W^{2,q}(\Omega)}\leq C_1\|g\|_{L^q(\Omega)}
\]
where $C_1$ depends only on $d,q, \lambda, \Omega$. As a consequence, any strong solution $u\in W^{2,q}(\Omega)\cap W^{1,q}_0(\Omega)$ to \eqref{hj} satisfies
\[
\|u\|_{W^{2,q}(\Omega)}\leq C_2(\||Du|\|_{L^{q\gamma}(\Omega)}^\gamma+\|f\|_{L^q(\Omega)}+1)
\]
where $C_2$ depends on $d,q,\lambda, \Omega,C_H$.
\end{thm}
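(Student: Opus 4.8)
The plan is to reduce the assertion to the classical Calder\'on--Zygmund theory for the linear Dirichlet problem and then read off the nonlinear bound by viewing \eqref{hj} as a Poisson-type equation with right-hand side $g:=f-H(\cdot,Du)$. The only genuinely paper-specific ingredient is the way the sign condition \eqref{zeroth} is used to make the constants independent of $u$ itself.

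First I would establish the linear estimate. Rewriting $-\Delta u+\lambda u=g$ as $\Delta u-\lambda u=-g$, the zeroth-order coefficient equals $-\lambda\le 0$ and the leading coefficients are constant, hence continuous, so on the $C^2$ (in particular $C^{1,1}$) domain $\Omega$ the global $W^{2,q}$ a priori estimate for the Dirichlet problem applies, see e.g. \cite[Theorem 9.15]{GT} or \cite[Section 5]{ChenWu}, giving
\[
\|u\|_{W^{2,q}(\Omega)}\le C\big(\|g\|_{L^q(\Omega)}+\|u\|_{L^q(\Omega)}\big),\qquad C=C(d,q,\lambda,\Omega).
\]
To remove the lower-order term I would exploit $\lambda>0$: testing the equation against $|u|^{q-2}u$ (admissible after the usual truncation/approximation, since $|u|^{q-2}u\in W^{1,q'}_0(\Omega)$) and integrating by parts yields
\[
(q-1)\int_\Omega |u|^{q-2}|Du|^2\,dx+\lambda\int_\Omega |u|^q\,dx=\int_\Omega g\,|u|^{q-2}u\,dx\le \|g\|_{L^q(\Omega)}\|u\|_{L^q(\Omega)}^{q-1},
\]
whence $\|u\|_{L^q(\Omega)}\le\lambda^{-1}\|g\|_{L^q(\Omega)}$; inserting this into the previous bound gives the first assertion with $C_1=C_1(d,q,\lambda,\Omega)$. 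This step is exactly where \eqref{zeroth} is used, mirroring its role throughout the paper in avoiding size restrictions on the datum.

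For the nonlinear consequence I would set $g:=f-H(\cdot,Du)$, which lies in $L^q(\Omega)$ since $u\in W^{2,q}(\Omega)$ forces $Du\in W^{1,q}(\Omega)\hookrightarrow L^{q\gamma}(\Omega)$ in the range considered, together with \eqref{H}; then $-\Delta u+\lambda u=g$ a.e. in $\Omega$, and the upper bound in \eqref{H} yields
\[
\|g\|_{L^q(\Omega)}\le \|f\|_{L^q(\Omega)}+C_H\||Du|\|_{L^{q\gamma}(\Omega)}^\gamma+C_H|\Omega|^{1/q}.
\]
Combining with the linear estimate produces
\[
\|u\|_{W^{2,q}(\Omega)}\le C_2\big(\||Du|\|_{L^{q\gamma}(\Omega)}^\gamma+\|f\|_{L^q(\Omega)}+1\big),\qquad C_2=C_2(d,q,\lambda,\Omega,C_H),
\]
which is the claim.

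I do not expect any substantial obstacle: this is a classical auxiliary fact. The only points deserving care are the admissibility of the test function $|u|^{q-2}u$ when $q<2$, handled by a routine truncation argument, and bookkeeping to ensure the constants depend only on $d,q,\lambda,\Omega$ (and additionally on $C_H$ in the second estimate) and not on $u$; this last feature is what makes the estimate usable as the starting point of the interpolation scheme in the proofs of Theorems \ref{main} and \ref{main2}.
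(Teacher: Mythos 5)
Your proof is correct and takes essentially the same route as the paper: cast the nonlinear equation as $-\Delta u + \lambda u = g$ with $g = f - H(\cdot,Du)$, invoke the linear Calder\'on--Zygmund estimate, and bound $\|g\|_{L^q}$ via \eqref{H}. The only small difference is in the linear step: the paper cites \cite[Lemma 9.17]{GT} (and \cite[Theorem 6.3]{ChenWu}), which already gives $\|u\|_{W^{2,q}}\le C\|g\|_{L^q}$ without the $\|u\|_{L^q}$ term because the zeroth-order coefficient $-\lambda$ is nonpositive, whereas you start from the weaker estimate with the $\|u\|_{L^q}$ term and remove it by an energy argument with the test function $|u|^{q-2}u$; both are legitimate, and your version is slightly more self-contained at the cost of the extra truncation technicality for $q<2$.
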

\begin{proof}
The first statement is a classical Calder\'on-Zygmund estimate that can be found in \cite[Lemma 9.17]{GT} and \cite[Theorem 6.3]{ChenWu}. The second result follows readily from the first by observing that if $u$ is a strong solution to \eqref{hj}, then it solves 
\[
-\Delta u+\lambda u=-H(x,Du)+g(x)
\]
with the same boundary conditions. Then, the estimate is a consequence of the assumptions \eqref{H} and the properties of Lebesgue spaces.
\end{proof}

\small


\begin{thebibliography}{10}

\bibitem{AF}
R.~A. Adams and J.~J.~F. Fournier.
\newblock {\em Sobolev spaces}, volume 140 of {\em Pure and Applied Mathematics
  (Amsterdam)}.
\newblock Elsevier/Academic Press, Amsterdam, second edition, 2003.

\bibitem{Agmon}
S.~Agmon.
\newblock The {$L_{p}$} approach to the {D}irichlet problem. {I}. {R}egularity
  theorems.
\newblock {\em Ann. Scuola Norm. Sup. Pisa Cl. Sci. (3)}, 13:405--448, 1959.

\bibitem{AFM15}
A.~Alvino, V.~Ferone, and A.~Mercaldo.
\newblock Sharp a priori estimates for a class of nonlinear elliptic equations
  with lower order terms.
\newblock {\em Ann. Mat. Pura Appl. (4)}, 194(4):1169--1201, 2015.

\bibitem{AC}
H.~Amann and M.~G. Crandall.
\newblock On some existence theorems for semi-linear elliptic equations.
\newblock {\em Indiana Univ. Math. J.}, 27(5):779--790, 1978.

\bibitem{BardiPerthame}
M.~Bardi and B.~Perthame.
\newblock Uniform estimates for some degenerating quasilinear elliptic
  equations and a bound on the {H}arnack constant for linear equations.
\newblock {\em Asymptotic Anal.}, 4(1):1--16, 1991.

\bibitem{BensFrehse}
A.~Bensoussan and J.~Frehse.
\newblock On {B}ellman systems without zero order term in the context of risk
  sensitive differential games.
\newblock In {\em Proceedings of {P}artial {D}ifferential {E}quations and
  {A}pplications ({O}lomouc, 1999)}, volume 126, pages 275--280, 2001.

\bibitem{BensoussanBook}
A.~Bensoussan and J.~Frehse.
\newblock {\em Regularity results for nonlinear elliptic systems and
  applications}, volume 151 of {\em Applied Mathematical Sciences}.
\newblock Springer-Verlag, Berlin, 2002.

\bibitem{MercaldoCPAA}
M.~F. Betta, R.~Di~Nardo, A.~Mercaldo, and A.~Perrotta.
\newblock Gradient estimates and comparison principle for some nonlinear
  elliptic equations.
\newblock {\em Commun. Pure Appl. Anal.}, 14(3):897--922, 2015.

\bibitem{BocUMI}
L.~Boccardo.
\newblock Some developments on {D}irichlet problems with discontinuous
  coefficients.
\newblock {\em Boll. Unione Mat. Ital. (9)}, 2(1):285--297, 2009.

\bibitem{BocJDE}
L.~Boccardo.
\newblock Dirichlet problems with singular convection terms and applications.
\newblock {\em J. Differential Equations}, 258(7):2290--2314, 2015.
\bibitem{BarlesPorretta}
G.~Barles and A.~Porretta.
\newblock Uniqueness for unbounded solutions to stationary viscous
  {H}amilton-{J}acobi equations.
\newblock {\em Ann. Sc. Norm. Super. Pisa Cl. Sci. (5)}, 5(1):107--136, 2006.

\bibitem{BGae}
L.~Boccardo and T.~Gallou\"{e}t.
\newblock Nonlinear elliptic equations with right-hand side measures.
\newblock {\em Comm. Partial Differential Equations}, 17(3-4):641--655, 1992.

\bibitem{BO}
L.~Boccardo and L.~Orsina.
\newblock The duality method for mean field games systems.
\newblock {\em To appear in Comm. Pure Appl. Anal.}, 2022.
\newblock { DOI:10.3934/cpaa.2022021}.

\bibitem{BocBuc}
L.~Boccardo, S.~Buccheri, and G.~R. Cirmi.
\newblock Two linear noncoercive {D}irichlet problems in duality.
\newblock {\em Milan J. Math.}, 86(1):97--104, 2018.

\bibitem{BMP92}
L.~Boccardo, F.~Murat, and J.-P. Puel.
\newblock {$L^\infty$} estimate for some nonlinear elliptic partial
  differential equations and application to an existence result.
\newblock {\em SIAM J. Math. Anal.}, 23(2):326--333, 1992.

\bibitem{BOPpar}
L.~Boccardo, L.~Orsina, and A.~Porretta.
\newblock Some noncoercive parabolic equations with lower order terms in
  divergence form.
\newblock {\em J. Evol. Equ.}, 3(3):407--418, 2003.

\bibitem{BOP}
L.~Boccardo, L.~Orsina, and A.~Porretta.
\newblock Strongly coupled elliptic equations related to mean-field games
  systems.
\newblock {\em J. Differential Equations}, 261(3):1796--1834, 2016.

\bibitem{BKR}
V.~I. Bogachev, N.~V. Krylov, and M.~R\"{o}ckner.
\newblock Elliptic regularity and essential self-adjointness of {D}irichlet
  operators on {$\bold R^{n}$}.
\newblock {\em Ann. Scuola Norm. Sup. Pisa Cl. Sci. (4)}, 24(3):451--461, 1997.

\bibitem{BKRS}
V.~I. Bogachev, N.~V. Krylov, M.~R\"ockner, and S.~V. Shaposhnikov.
\newblock {\em Fokker-{P}lanck-{K}olmogorov equations}, volume 207 of {\em
  Mathematical Surveys and Monographs}.
\newblock American Mathematical Society, Providence, RI, 2015.

\bibitem{BrezisAMO}
H.~Brezis.
\newblock Semilinear equations in {${\bf R}^N$} without condition at infinity.
\newblock {\em Appl. Math. Optim.}, 12(3):271--282, 1984.


\bibitem{CC}
A.~Cesaroni and M.~Cirant.
\newblock Introduction to variational methods for viscous ergodic mean-field
  games with local coupling.
\newblock In {\em Contemporary research in elliptic {PDE}s and related topics},
  volume~33 of {\em Springer INdAM Ser.}, pages 221--246. Springer, Cham, 2019.
\bibitem{CianchiMazya}
A.~Cianchi and V.~Maz'ya.
\newblock Gradient regularity via rearrangements for {$p$}-{L}aplacian type
  elliptic boundary value problems.
\newblock {\em J. Eur. Math. Soc. (JEMS)}, 16(3):571--595, 2014.
\bibitem{ChenWu}
Y.-Z. Chen and L.-C. Wu.
\newblock {\em Second order elliptic equations and elliptic systems}, volume
  174 of {\em Translations of Mathematical Monographs}.
\newblock American Mathematical Society, Providence, RI, 1998.

\bibitem{CCPDE}
M.~Cirant.
\newblock Stationary focusing mean-field games.
\newblock {\em Comm. Partial Differential Equations}, 41(8):1324--1346, 2016.

\bibitem{c22par}
M.~Cirant.
\newblock On the improvement of {H}\"older seminorms in superquadratic
  {H}amilton-{J}acobi equations.
\newblock arXiv:2208.00082, 2022.

\bibitem{cg20}
M.~Cirant and A.~Goffi.
\newblock Lipschitz regularity for viscous {H}amilton-{J}acobi equations with
  {$L^p$} terms.
\newblock {\em Ann. Inst. H. Poincar\'{e} Anal. Non Lin\'{e}aire},
  37(4):757--784, 2020.

\bibitem{CGpar}
M.~Cirant and A.~Goffi.
\newblock Maximal {$L^q$}-regularity for parabolic {H}amilton-{J}acobi
  equations and applications to {M}ean {F}ield {G}ames.
\newblock {\em Ann. PDE}, 7(2):Paper No. 19, 40, 2021.

\bibitem{CGell}
M.~Cirant and A.~Goffi.
\newblock On the problem of maximal {$L^q$}-regularity for viscous
  {H}amilton-{J}acobi equations.
\newblock {\em Arch. Ration. Mech. Anal.}, 240(3):1521--1534, 2021.

\bibitem{CV}
M.~Cirant and G.~Verzini.
\newblock Local {H}\"older and maximal regularity of solutions of elliptic
  equations with superquadratic gradient terms.
\newblock {\em Adv. Math.}, 409:108700, 16, 2022.

\bibitem{CGL}
M.~Cirant, A.~Goffi and T.~Leonori.
\newblock Gradient estimates for quasilinear elliptic Neumann problems with unbounded first-order terms, arXiv: 2211.03760, 2022.

\bibitem{DAP}
A.~Dall'Aglio and A.~Porretta.
\newblock Local and global regularity of weak solutions of elliptic equations
  with superquadratic {H}amiltonian.
\newblock {\em Trans. Amer. Math. Soc.}, 367(5):3017--3039, 2015.

\bibitem{EvansTAMS}
L.~C. Evans.
\newblock Some estimates for nondivergence structure, second order elliptic
  equations.
\newblock {\em Trans. Amer. Math. Soc.}, 287(2):701--712, 1985.

\bibitem{Evansacc}
L.~C. Evans.
\newblock Adjoint and compensated compactness methods for {H}amilton-{J}acobi
  {PDE}.
\newblock {\em Arch. Ration. Mech. Anal.}, 197(3):1053--1088, 2010.

\bibitem{EvansSmart}
L.~C. Evans and C.~K. Smart.
\newblock Adjoint methods for the infinity {L}aplacian partial differential
  equation.
\newblock {\em Arch. Ration. Mech. Anal.}, 201(1):87--113, 2011.

\bibitem{FM}
V.~Ferone and B.~Messano.
\newblock Comparison and existence results for classes of nonlinear
              elliptic equations with general growth in the gradient.
\newblock {\em Adv. Nonlinear Stud.}, 7(1):31--46, 2007.

\bibitem{FL}
I.~Fonseca and G.~Leoni.
\newblock {\em Modern methods in the calculus of variations: {$L^p$} spaces}.
\newblock Springer Monographs in Mathematics. Springer, New York, 2007.



\bibitem{GT}
D.~Gilbarg and N.~S. Trudinger.
\newblock {\em Elliptic partial differential equations of second order}.
\newblock Classics in Mathematics. Springer-Verlag, Berlin, 2001.
\newblock Reprint of the 1998 edition.

\bibitem{GP}
A.~Goffi and F.~Pediconi.
\newblock Sobolev regularity for nonlinear {P}oisson equations with {N}eumann
  boundary conditions on {R}iemannian manifolds.
\newblock Forum Math., 2023, DOI: 10.1515/forum-2022-0119.

\bibitem{Gomesbook}
D.~A. Gomes, E.~A. Pimentel, and V.~Voskanyan.
\newblock {\em Regularity theory for mean-field game systems}.
\newblock SpringerBriefs in Mathematics. Springer, [Cham], 2016.

\bibitem{GMP14}
N.~Grenon, F.~Murat, and A.~Porretta.
\newblock A priori estimates and existence for elliptic equations with gradient
  dependent terms.
\newblock {\em Ann. Sc. Norm. Super. Pisa Cl. Sci. (5)}, 13(1):137--205, 2014.

\bibitem{HMV99}
K.~Hansson, V.~G. Maz'ya, and I.~E. Verbitsky.
\newblock Criteria of solvability for multidimensional {R}iccati equations.
\newblock {\em Ark. Mat.}, 37(1):87--120, 1999.

\bibitem{KK}
J.~L. Kazdan and R.~J. Kramer.
\newblock Invariant criteria for existence of solutions to second-order
  quasilinear elliptic equations.
\newblock {\em Comm. Pure Appl. Math.}, 31(5):619--645, 1978.

\bibitem{KM}
T.~Kuusi and G.~Mingione.
\newblock Guide to nonlinear potential estimates.
\newblock {\em Bull. Math. Sci.}, 4(1):1--82, 2014.

\bibitem{KS}
S.~Koike and A.~\'{S}wiech.
\newblock Maximum principle for fully nonlinear equations via the iterated
  comparison function method.
\newblock {\em Math. Ann.}, 339(2):461--484, 2007.

\bibitem{LuBook}
O.~A. Ladyzhenskaya and N.~N. Ural'tseva.
\newblock {\em Linear and quasilinear elliptic equations}.
\newblock Academic Press, New York-London, 1968.

\bibitem{LL07}
J.-M. Lasry and P.-L. Lions.
\newblock Mean field games.
\newblock {\em Jpn. J. Math.}, 2(1):229--260, 2007.

\bibitem{Lin}
F.-H. Lin.
\newblock Second derivative {$L^p$}-estimates for elliptic equations of
  nondivergent type.
\newblock {\em Proc. Amer. Math. Soc.}, 96(3):447--451, 1986.

\bibitem{Lions85}
P.-L. Lions.
\newblock Quelques remarques sur les probl\`emes elliptiques quasilin\'{e}aires
  du second ordre.
\newblock {\em J. Analyse Math.}, 45:234--254, 1985.

\bibitem{LionsSeminar}
P.-L. Lions.
\newblock Recorded video of S\'eminaire de Math\'ematiques appliqu\'ees at
  Coll\'ege de France, available at
  \url{https://www.college-de-france.fr/site/pierre-louis-lions/seminar-2014-11-14-11h15.htm},
  November 14, 2014.

\bibitem{Napoli}
P.-L. Lions.
\newblock {O}n {M}ean {F}ield {G}ames.
\newblock Seminar at the conference ``{T}opics in {E}lliptic and {P}arabolic
  {PDE}s", Napoli, September 11-12, 2014.
  
\bibitem{MaSoBook}
A.~Maugeri, D.~K. Palagachev, and L.~G. Softova.
\newblock {\em Elliptic and parabolic equations with discontinuous
  coefficients}, volume 109 of {\em Mathematical Research}.
\newblock Wiley-VCH Verlag Berlin GmbH, Berlin, 2000.

\bibitem{MPRell}
G.~Metafune, D.~Pallara, and A.~Rhandi.
\newblock Global properties of invariant measures.
\newblock {\em J. Funct. Anal.}, 223(2):396--424, 2005.

\bibitem{Nirenberg}
L.~Nirenberg.
\newblock On elliptic partial differential equations.
\newblock {\em Ann. Scuola Norm. Sup. Pisa Cl. Sci. (3)}, 13:115--162, 1959.

\bibitem{PhucCPDE}
N.~C. Phuc.
\newblock Quasilinear {R}iccati type equations with super-critical exponents.
\newblock {\em Comm. Partial Differential Equations}, 35(11):1958--1981, 2010.

\bibitem{Pierre}
M.~Pierre.
\newblock Global existence in reaction-diffusion systems with control of mass:
  a survey.
\newblock {\em Milan J. Math.}, 78(2):417--455, 2010.

\bibitem{NotePorr}
A.~Porretta.
\newblock {E}lliptic equations with first order terms.
\newblock Notes of the CIMPA school, Alexandria, 2009.

\bibitem{PorroLincei}
A.~Porretta.
\newblock The ``ergodic limit'' for a viscous {H}amilton-{J}acobi equation with
  {D}irichlet conditions.
\newblock {\em Atti Accad. Naz. Lincei Rend. Lincei Mat. Appl.}, 21(1):59--78,
  2010.

\bibitem{S}
M.~Schechter.
\newblock On {$L^{p}$} estimates and regularity. {I}.
\newblock {\em Amer. J. Math.}, 85:1--13, 1963.

\bibitem{TranBook}
H.~V. Tran.
\newblock {\em Hamilton-{J}acobi equations}, volume 213 of {\em Graduate
  Studies in Mathematics}.
\newblock American Mathematical Society, Providence, RI, 2021.
\newblock Theory and applications.

\end{thebibliography}
\end{document}